\newtheorem{theorem}{Theorem}[section]
\newtheorem{lemma}[theorem]{Lemma}
\theoremstyle{definition}
\newtheorem{definition}[theorem]{Definition}
\newtheorem{example}[theorem]{Example}
\newtheorem{proposition}[theorem]{Proposition}
\newtheorem{corollary}[theorem]{Corollary}
\theoremstyle{remark}
\numberwithin{equation}{section}
\newcommand{\R}{\mathbb{R}}
\newcommand{\Z}{\mathbb{Z}}
\newcommand{\F}{\mathbb{F}}
\newcommand{\Imt}{\mbox{Im}\,}
\begin{document}

\title
{Classifying closed 2-orbifolds with Euler characteristics}

\author{Whitney DuVal}
\address{Department of Mathematics and Computer Science,
Rhodes College, 2000 N. Parkway, Memphis, TN 38112}
\email{DuvWR@rhodes.edu}

\author{John Schulte}
\email{SchJM@rhodes.edu}

\author{Christopher Seaton}
\email{seatonc@rhodes.edu}

\author{Bradford Taylor}
\email{TayBP@rhodes.edu}

\subjclass[2000]{Primary 57R20, 57S17; Secondary 22A22, 57P99}

\begin{abstract}

We determine the extent to which the collection of
$\Gamma$-Euler-Satake characteristics classify closed $2$-orbifolds.
In particular,  we show that the closed, connected, effective,
orientable $2$-orbifolds are classified by the collection of
$\Gamma$-Euler-Satake characteristics corresponding to free or free
abelian $\Gamma$ and are not classified by those corresponding to
any finite collection of
finitely generated discrete groups.  Similarly, we show that such a classification is
not possible for non-orientable $2$-orbifolds and any collection of
$\Gamma$, nor for noneffective $2$-orbifolds.  As a corollary, we
generate families of orbifolds with the same $\Gamma$-Euler-Satake
characteristics in arbitrary dimensions for any finite collection of
$\Gamma$; this is used to demonstrate that the $\Gamma$-Euler-Satake
characteristics each constitute new invariants of  orbifolds.

\end{abstract}

\maketitle


\section{Introduction}
\label{sec-intro}

In a recent paper \cite{farsiseaton1}, the third author and Carla
Farsi introduced the \emph{$\Gamma$-sectors of an orbifold $Q$}, a
generalization of the inertia orbifold of $Q$ that is defined for
any finitely generated discrete group $\Gamma$.  In this context,
the inertia orbifold (originally
defined by Kawasaki in \cite{kawasaki1}; see also
\cite{ademleidaruan} and \cite{chenruanorbcohom}) corresponds to the
case $\Gamma = \Z$; similarly, the $k$-multi-sectors of Chen and
Ruan (see \cite{ademleidaruan} or \cite{chenruanorbcohom})
correspond to the case when $\Gamma = \F_k$ is the free group with
$k$ generators.

In \cite{farsiseaton3}, it is shown that several Euler
characteristics that have been defined for orbifolds correspond to
the \emph{$\Gamma$-Euler-Satake characteristics} for some choice of
$\Gamma$, i.e. the Euler-Satake characteristic of the
$\Gamma$-sectors of $Q$, denoted $\chi_\Gamma^{ES}(Q)$. Hence, the
$\Gamma$-sectors offer a framework in which to generalize the Euler
characteristics of Bryan and Fulman (see \cite{bryanfulman}) and
Tamanoi (see \cite{tamanoi1} and \cite{tamanoi2}) to closed
orbifolds that are not necessarily global quotients.  In this
context, the Euler characteristics of Bryan and Fulman correspond to
$\Gamma$ free abelian; in particular, the stringy orbifold Euler
characteristic defined for global quotients in \cite{dixon} and for
general orbifolds in \cite{roan} corresponds to the case $\Gamma =
\Z^2$.

Here, we address the question of whether the
$\Gamma$-Euler-Satake characteristics classify closed, connected,
2-dimensional orbifolds. The diffeomorphism-types of all closed
$2$-orbifolds are well-known; see e.g. \cite{thurston} or
\cite{boileaump}.  Here, however, we express this classification in
a framework generalizing the familiar classification of closed
2-manifolds.  An additional motivation of this investigation is to
explore the extent to which the $\Gamma$-Euler-Satake
characteristics constitute new invariants for orbifolds.  Indeed, from their
definition, the degree to which collections of the $\Gamma$-Euler-Satake characteristics
depend on one another is unclear.  We will see, however, that the characteristics
corresponding to abelian $\Gamma$ are in some sense independent; the class of
$2$-dimensional orientable orbifolds is sufficiently large to illustrate this fact.
In this case, each $\Gamma$-Euler-Satake characteristic corresponds to the
$\Gamma^\prime$-Euler-Satake characteristic for an abelain $\Gamma^\prime$.
In the future, we will investigate classes of orbifolds that may
indicate the differences between abelian and nonabelian
$\Gamma$.

To simplify notation, for a closed orbifold $Q$, we define
\[
    \chi_{(l)}^{ES}(Q) = \chi_{\Z^l}^{ES}(Q).
\]
Then for $l \geq 1$, these Euler characteristics correspond to the orbifold Euler
characteristics defined for global quotients in \cite{bryanfulman}
(note that our $\chi_{(l)}^{ES}(Q)$ corresponds to $\chi_{l+1}(M, G)$ in
\cite{bryanfulman} when $Q$ is given by the action of a finite group $G$
on a manifold $M$).  It is observed in \cite[Section 4.1]{farsiseaton3} that
$\chi_{(l)}^{ES}(Q)$ also corresponds to the Euler-Satake characteristic
of the $l$th inertia orbifold of $Q$ and the Euler characteristic of the
(underlying topological space of the) $l-1$st inertia orbifold.

If $Q$ is an \emph{abelian orbifold} (i.e. all isotropy groups of $Q$ are abelian),
it is easy to see that
\[
    \chi_{(l)}^{ES}(Q) = \chi_{\F_l}^{ES}(Q),
\]
where $\F_l$ denotes the free group with $l$ generators; in particular, this follows
from Lemma \ref{lem-fingenmodcommutators} below.  It follows that in this case,
$\chi_{(l)}^{ES}(Q)$ is the Euler-Satake characteristic of the $l$-multi-sectors of $Q$;
see \cite{ademleidaruan}.

Of primary interest will be the case of a
closed, connected, effective, orientable $2$-orbifold $Q$,
for which the $\chi_{(l)}^{ES}(Q)$ will play a dominant role.
Our first main result is a positive classification of these
orbifolds using the $\chi_{(l)}^{ES}$.

\begin{theorem}
\label{thrm-mainpositive}

Let $Q$ and $Q^\prime$ be closed, connected, effective, orientable
$2$-orbifolds such that $\chi_{(l)}^{ES}(Q) = \chi_{(l)}^{ES}(Q^\prime)$
for each nonnegative integer $l$. Then $Q$ and $Q^\prime$ are
diffeomorphic.

\end{theorem}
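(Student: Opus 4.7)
The plan is to reduce the classification to an inversion problem for a generalized power-sum sequence. First, I would invoke the classical diffeomorphism classification of closed, connected, effective, orientable $2$-orbifolds: such a $Q$ is determined by the genus $g$ of its underlying surface together with the unordered multiset $\{n_1,\ldots,n_k\}$ of cone-point orders (each $n_i\geq 2$). Since $Q$ is effective, orientable, and $2$-dimensional, every isotropy group is a finite subgroup of $SO(2)$ and hence cyclic, so $Q$ is in particular an abelian orbifold.

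Second, I would write $\chi_{(l)}^{ES}(Q)$ explicitly in terms of $(g;n_1,\ldots,n_k)$. The $\Z^l$-sectors decompose into the identity sector, which equals $Q$ itself, together with one isolated twisted sector for each nontrivial homomorphism $\Z^l\to\Z/n_i$ at each cone point $x_i$. Because $\Z/n_i$ is abelian, any such twisted sector is the single point $x_i$ with isotropy the full cyclic group $\Z/n_i$, contributing $1/n_i$ to the Euler-Satake characteristic. Summing the $n_i^l-1$ nontrivial contributions at each cone point and combining with $\chi^{ES}(Q)=2-2g-\sum_i(1-1/n_i)$ should yield
\[
\chi_{(l)}^{ES}(Q)=2-2g-k+\sum_{i=1}^k n_i^{l-1}, \qquad l\geq 1,
\]
with the case $l=0$ recovering $\chi^{ES}(Q)$.

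Third, I would show that this sequence determines $(g;n_1,\ldots,n_k)$. The value at $l=1$ is $2-2g$, which pins down $g$. If $Q'$ is another such orbifold with data $(g';n_1',\ldots,n_{k'}')$ and matching Euler-Satake characteristics, then $g=g'$ and
\[
\sum_{i=1}^k n_i^{l-1}-\sum_{j=1}^{k'}(n_j')^{l-1}=k-k'
\]
for every $l\geq 1$. Viewing the left side as a linear combination of the exponential functions $l\mapsto n^{l-1}$ for distinct integers $n\geq 2$, their linear independence (a Vandermonde-type argument) forces the cone-point multisets to coincide and $k=k'$, completing the classification.

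I expect the main obstacle to be step two: carefully identifying the $\Z^l$-sectors of a closed effective orientable $2$-orbifold and computing the Euler-Satake characteristic of each from the sector machinery of the preceding sections. Once the explicit formula is established, step three is an elementary linear-independence observation and step one is a well-known classification, so all the substantive content is concentrated in the sector computation.
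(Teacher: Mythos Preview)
Your proposal is correct and follows essentially the same route as the paper: the formula you derive in step two is exactly Proposition~\ref{prop-orientedECformula}, and your use of $l=1$ to read off the genus matches the paper's argument. The only variation is in the final step: the paper first invokes Lemma~\ref{lem-removeequalconepoints} to strip cone points common to $Q$ and $Q'$, reduces to the case $m_k > m_{k'}'$, and then runs an explicit growth-rate comparison as $l\to\infty$; your linear-independence (Vandermonde) argument accomplishes the same thing in one stroke without the preliminary stripping, and is arguably the cleaner packaging.
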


It is well-known (see e.g. \cite{massey}) that closed, connected,
orientable, 2-dimensional manifolds are completely characterized by
their Euler characteristic.  If $Q$ is a manifold, the
$\Gamma$-Euler-Satake characteristic of $Q$ reduces to the usual
Euler characteristic for any $\Gamma$. Hence, Theorem
\ref{thrm-mainpositive} constitutes a generalization of this result
to orbifolds. However, this class of orbifolds is large enough to
produce the following.

\begin{theorem}
\label{thrm-mainnegative}

Let $N \geq 2$ be an integer and let $\mathfrak{G}$ be any finite collection of finitely
generated discrete groups. Then there are distinct closed, connected, effective, orientable $2$-orbifolds
$Q_1, Q_2, \ldots , Q_N$ such that for each $\Gamma \in
\mathfrak{G}$,
\[
    \chi_\Gamma^{ES}(Q_1) = \chi_\Gamma^{ES}(Q_2) = \cdots = \chi_\Gamma^{ES}(Q_N).
\]

\end{theorem}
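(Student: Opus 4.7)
The plan is to reduce Theorem \ref{thrm-mainnegative} to a short linear-algebra exercise. A closed, connected, effective, orientable $2$-orbifold is determined up to diffeomorphism by the genus $g$ of its underlying surface together with the multiset of cone-point orders; equivalently, by a tuple $(g, (n_m)_{m \geq 2})$ where $n_m$ denotes the number of cone points of order $m$. Since the isotropy at each cone point is cyclic, the $\Gamma$-sectors over a cone point of order $m$ are indexed by $\mathrm{Hom}(\Gamma, \Z/m\Z)$, each nonidentity sector consisting of an isolated point with isotropy $\Z/m\Z$; assembled with the identity sector (a copy of $Q$ itself), these yield the formula
\[
\chi_\Gamma^{ES}(Q) \;=\; (2 - 2g) \;+\; \sum_{m \geq 2} n_m \left( \frac{|\mathrm{Hom}(\Gamma, \Z/m\Z)|}{m} - 1 \right).
\]
In particular $\chi_\Gamma^{ES}$ is a $\Q$-linear function of $(g, (n_m))$ whose coefficients depend only on $\Gamma$.

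Enumerate $\mathfrak{G} = \{\Gamma_1, \ldots, \Gamma_r\}$; the $r$-tuple $(\chi_{\Gamma_j}^{ES})_{j=1}^r$ then defines a $\Q$-linear map $L : \Q \oplus \bigoplus_{m \geq 2} \Q \to \R^r$. To prove Theorem \ref{thrm-mainnegative} it suffices to exhibit a nonzero integer vector $v \in \ker L$ of finite support together with a nonnegative integer base type $(g_0, (n_{m, 0}))$ such that $(g_0, (n_{m, 0})) + k\, v$ has nonnegative entries for every $0 \leq k \leq N - 1$; the $N$ closed orientable $2$-orbifolds realizing these type vectors are then pairwise distinct (since $v \neq 0$) yet share the common value $L(g_0, (n_{m, 0}))$ of every $\chi_{\Gamma_j}^{ES}$.

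Such a $v$ exists by a dimension count: the domain of $L$ is infinite-dimensional over $\Q$ while the codomain has dimension $r < \infty$, so $\ker L$ has infinite $\Q$-dimension, and clearing denominators of any nonzero rational kernel vector produces an integer kernel vector of finite support. Writing $v = (\Delta g, (\Delta n_m))$, the nonnegativity requirement is met by taking $g_0 \geq N \, |\Delta g|$ and, for each $m$ in the (finite) support of the $\Delta n_m$'s, $n_{m, 0} \geq N \, |\Delta n_m|$. The main obstacle is thus the derivation of the displayed formula for $\chi_\Gamma^{ES}(Q)$ in terms of $(g, (n_m))$; once this is established, the rest of the argument is the routine kernel computation above.
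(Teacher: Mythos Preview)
Your argument is correct and considerably shorter than the paper's, though note one minor slip: the displayed formula shows that $\chi_\Gamma^{ES}(Q)$ is \emph{affine} in $(g,(n_m))$, not linear (there is a constant term $2$). The fix is immediate---take $L$ to be the linear part $L_0(g,(n_m)) = \bigl(-2g + \sum_m n_m\,c_{\Gamma_j,m}\bigr)_j$, so that $\chi_{\Gamma_j}^{ES} = 2 + (L_0)_j$ and your kernel argument goes through verbatim. (Alternatively, restrict at the outset to vectors with $\Delta g = 0$; the domain $\bigoplus_{m\geq 2}\Q$ is still infinite-dimensional and the restricted map is genuinely linear.) Your formula for $\chi_\Gamma^{ES}(Q)$ is exactly what the paper's description of the $\Gamma$-sectors in the paragraphs preceding Proposition~\ref{prop-orientedECformula} yields, so that step is not an obstacle.

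The route is genuinely different from the paper's. The paper first builds, by hand, an explicit one-parameter family of pairs $Q[g,q],\,Q'[g,q]$ matching $\chi_{(l)}^{ES}$ for $l\leq 2$ (Lemma~\ref{lem-basecase}), then recursively combines $2^{L-2}$ such pairs via the $\star$ and $\circledast$ operations to force agreement up to $l\leq L$ (Lemma~\ref{lem-inductivestep}), enlarges a single pair to $N$ orbifolds (Lemma~\ref{lem-constructaddsolcmult}), and finally passes from free abelian $\Gamma$ to arbitrary $\Gamma$ by abelianizing (Lemma~\ref{lem-fingenmodcommutators}) and choosing cone-point orders coprime to the torsion primes of $\mathfrak{A}$ (proof of Theorem~\ref{thrm-mainnegativegeneral}). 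Your dimension count bypasses all of this machinery in one stroke and handles arbitrary $\Gamma$ directly, without the abelianization reduction or the coprimality trick. What the paper's longer construction buys is explicit control: the genus can be prescribed arbitrarily and the cone-point orders are drawn from a specified finite set, features that are used to state and prove the sharper Theorem~\ref{thrm-mainnegativegeneral} (where $\mathfrak{G}$ may be infinite subject to the bounded-rank and finite-$\mathfrak{P}$ hypotheses). Your soft argument does not immediately give that refinement, though restricting to $\Delta g = 0$ recovers the freedom to choose the genus.
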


It follows that the classification of Theorem \ref{thrm-mainpositive} cannot be
improved upon using the $\Gamma$-Euler-Satake characteristics.  Note
that Theorem \ref{thrm-mainnegativegeneral} is a slightly more general
version of Theorem \ref{thrm-mainnegative}, though clumsier to
state.

The outline of this work is as follows.  In Section
\ref{sec-definitions}, we recall the necessary definitions and
summarize the pertinent preliminary material.  We study effective, orientable
$2$-orbifolds in Section
\ref{sec-GammaESCcleffornt2orb} and prove Theorems \ref{thrm-mainpositive} and
\ref{thrm-mainnegative}.  In Section \ref{sec-otherorbs}, we
demonstrate through examples that the hypotheses of Theorem
\ref{thrm-mainpositive} cannot be relaxed.


This paper is the result of the course `Topics: Orbifold Euler
Characteristics' taught in the Rhodes College Mathematics and
Computer Science Department in the Fall of 2008. We express our
appreciation to the department and college for the versatility and
support that allowed us to hold this seminar and explore these
results.  We would also like to thank Rachel Dunwell for helpful
suggestions and assistance.

The third author would like to thank Carla Farsi and Anna Casteen,
with whom he has conducted work leading to this project.
In particular, Proposition \ref{prop-orientedECformula} was first
proved by Anna Casteen as part of her senior seminar project `Finding orbifold
Euler characteristics' at Rhodes College in the spring of 2008.


\section{Background and Definitions}
\label{sec-definitions}

In this section, we briefly introduce the required definitions and
fix notation. For a more thorough background on orbifolds, the
reader is referred to \cite{ademleidaruan} or \cite{chenruangwt};
see also \cite{boileaump}, \cite{moerdijkmrcun}, or \cite{thurston},
and note that the orbifolds in these latter references correspond to
effective orbifolds.  We will have the occasion to consider
noneffective orbifolds only in Example \ref{ex-noneffective} and
only in the form of a global quotient.

An \emph{orbifold} $Q$ is most succinctly defined to be a Morita
equivalence class of orbifold groupoids, i.e. proper \'{e}tale Lie groupoids.  Such a
groupoid $\mathcal{G}$ is called a \emph{presentation} of the
orbifold $Q$, and two orbifold groupoids $\mathcal{G}$ and
$\mathcal{G}^\prime$ present the same orbifold if and only if they
are Morita equivalent. In this case, their orbit spaces
$|\mathcal{G}|$ and $|\mathcal{G}^\prime|$ are naturally
homeomorphic, and we say that they are \emph{diffeomorphic} as
orbifolds.

Fix a proper \'{e}tale Lie groupoid $\mathcal{G}$ with space of
objects $G_0$ and space of arrows $G_1$.  For each $x \in G_0$,
there is a neighborhood $V_x \subseteq G_0$ of $x$ diffeomorphic to
$\R^n$ such that if $G_x$ denotes the isotropy group of $x$, then
there is a $G_x$-action on $V_x$, and the restriction
$\mathcal{G}|_{V_x}$ is isomorphic as a Lie groupoid to the
translation groupoid $G_x \ltimes V_x$.  We let $\pi_x : V_x
\rightarrow |\mathcal{G}|$ denote the quotient map into the orbit space of
$\mathcal{G}$.  In this way, the definition of an orbifold in terms
of orbifold charts is recovered, as $\{ V_x, G_x, \pi_x \}$ gives an
orbifold chart for $Q$ near the point representing the orbit of $x$.
Note that we can always take $x$ to correspond to the origin in
$\R^n$ and $G_x$ to act linearly; we then refer to $\{V_x, G_x, \pi_x\}$
as a \emph{linear chart}.  If $y$ is another point in $G_0$
in the orbit of $x$, then $G_y$ and $G_x$ are isomorphic.  Hence, if
$p \in |\mathcal{G}|$ denotes the orbit of $x$, then we can define $G_p$ to be
(the isomorphism class of) $G_x$. The point $p \in |\mathcal{G}|$ is a
nonsingular point if $G_p$ is trivial and a singular point
otherwise.

We say that an orbifold $Q$ is \emph{effective} if $\mathcal{G}$ is
an effective groupoid, or equivalently if the local $G_x$-actions on
the $V_x$ are effective.  By \emph{closed} or \emph{connected}, we
mean that the orbit space $|\mathcal{G}|$ is compact or connected,
respectively, as a topological space.  An orbifold is
\emph{oriented} if $G_0$ is equipped with a $G_1$-invariant
orientation; if $\mathcal{G}$ admits an orientation, we say that $Q$
is \emph{orientable}.  Note that each of these qualities is
preserved under Morita equivalence so that they describe the
orbifold $Q$ as well as the presentation $\mathcal{G}$.

If $Q$ is a closed, connected, effective, $2$-dimensional orbifold
and $x \in G_0$, then $G_x$ is a finite subgroup of $O(2)$ (with
respect to any inner product on $V_x$). It follows that $G_x$ is either a
cyclic group acting as rotations, a group isomorphic to $\Z/2\Z$
acting as reflection through a line, or a group isomorphic to a
dihedral group whose action is generated by reflections through two
lines (see \cite{thurston}).  The singular points associated to
these actions are referred to as \emph{cone points} (or
\emph{elliptic points}), \emph{reflector lines}, and \emph{corner
reflectors}, respectively. Only the first of these three preserves
an orientation of $\R^2$; hence, if we assume further that $Q$ is
orientable, then the singular points are isolated cone points with
cyclic isotropy.  By the order of the cone point, we will mean the
order of the isotropy group.  It follows that the
underlying space is homeomorphic to a closed, connected orientable surface,
and the set of singular points corresponds to a finite collection
$\{p_1, p_2, \ldots , p_k \}$ of cone points.

A closed, connected, effective, orientable, $2$-dimensional orbifold,
then, is determined by the genus $g$ of the underlying space, a
nonnegative integer $k$ indicating the number of cone points, and an
integer $m_i \geq 2$ for $i = 1, 2, \ldots , k$ with
$m_1 \leq m_2 \leq \cdots \leq m_k$, indicating the
order of each cone point.  We will use the notation $\Sigma_g(m_1,
\ldots , m_k)$ to denote this orbifold.  Note that we will often
refer to the genus of the underlying space of $Q$ simply as the
genus of $Q$.

Let $Q$ be an orbifold.  For each finitely generated discrete group
$\Gamma$, we associate to $Q$ an orbifold $\tilde{Q}_\Gamma$ called
the \emph{$\Gamma$-sectors} of $Q$.  We recall this construction
briefly; see \cite{farsiseaton1} for more details. Let
$\mathcal{S}_\mathcal{G}^\Gamma$ denote the space of groupoid homomorphisms
from $\Gamma$ into $\mathcal{G}$ or equivalently group homomorphisms from
$\Gamma$ into an isotropy group $G_x$ of $\mathcal{G}$.  Then
$\mathcal{S}_\mathcal{G}^\Gamma$ has the structure of a smooth
manifold, possibly with connected components of different dimensions.  There
is a natural $\mathcal{G}$-action on
$\mathcal{S}_\mathcal{G}^\Gamma$ by pointwise conjugation, and the
groupoid $\mathcal{G}^\Gamma = \mathcal{G} \ltimes
\mathcal{S}_\mathcal{G}^\Gamma$ is a presentation of the orbifold of
$\Gamma$-sectors $\tilde{Q}_\Gamma$. If $\{ V_x, G_x, \pi_x \}$ is a
linear chart for $Q$ and $\phi_x : \Gamma \rightarrow G_x$ is a
homomorphism, then $\{ V_x^{\langle \phi_x \rangle},
C_{G_x}(\phi_x), \pi_x^{\phi_x} \}$ is a linear chart for
$\tilde{Q}_\Gamma$ near $\phi_x$ where $V_x^{\langle \phi_x
\rangle}$ denotes the subspace of $V_x$ fixed by the image of
$\phi_x$, $C_{G_x}(\phi_x)$ is the centralizer of the image of
$\phi_x$ in $G_x$, and $\pi_x^{\phi_x}$ denotes the quotient map of the
$C_{G_x}(\phi_x)$-action.  The connected component $\tilde{Q}_{(1)}$
of $\tilde{Q}_\Gamma$ corresponding to the identity homomorphism
(into any isotropy group) is diffeomorphic to $Q$.  We denote the
connected component of a homomorphism $\phi_x : \Gamma \rightarrow
G_x$ by $\tilde{Q}_{(\phi)}$. Note that the $\Z$-sectors correspond
to the inertia orbifold, and the $\F_l$-sectors correspond to the
$l$-multi-sectors (see \cite{farsiseaton2}; see also
\cite{ademleidaruan} or \cite{chenruanorbcohom} for the
definitions).

In the case that $Q$ is presented by $M \rtimes G$ where $G$
is a finite group acting on the smooth manifold $M$, then our description of the
$\Gamma$-sectors corresponds to that of Tamanoi in \cite{tamanoi1} and \cite{tamanoi2},
where
\begin{equation}
\label{eq-globalsectordescrip}
    \tilde{Q}_\Gamma = 
    \coprod\limits_{(\phi) \in \mbox{\scriptsize HOM}(\Gamma, G)/G}
    M^{\langle \phi \rangle} \rtimes C_G(\phi).
\end{equation}
Here, the union is over conjugacy classes $(\phi)$ of homomorphisms
$\phi \in \mbox{HOM}(\Gamma, G)$.  In this case, we use $(M;G)_{(\phi)}$ to denote
$M^{\langle \phi \rangle} \rtimes C_G(\phi)$.  Note that this description coincides
with ours more generally for $G$ a Lie group with certain restrictions on the action;
see \cite[Section 3]{farsiseaton2}.

The \emph{Euler-Satake characteristic} was first defined in
\cite{satake2}, then called the \emph{Euler characteristic as a
$V$-manifold}.  Satake's definition generalizes directly to the
noneffective case.  Given a simplicial decomposition $\mathcal{T}$
of the underlying space of $Q$ such that the order of the isotropy
group $G_\sigma$ on the interior of each simplex $\sigma
\in \mathcal{T}$ is constant (which always exists; see
\cite{moerdijkpronksimplicial} or \cite{farsiseaton3}), we define
\[
    \chi_{ES}(Q)
    =
    \sum\limits_{\sigma \in \mathcal{T}}
    \frac{(-1)^{\mbox{\scriptsize dim}\: \sigma}}{|G_\sigma|}.
\]
The Euler-Satake characteristic clearly reduces to the usual Euler
characteristic in the case that each isotropy group of $Q$ is
trivial, i.e. in the case of a manifold.

Given a finitely generated discrete group, we define the
\emph{$\Gamma$-Euler-Satake characteristic of $Q$} to be the
Euler-Satake characteristic of the $\Gamma$-sectors of $Q$; i.e.
\[
    \chi_\Gamma^{ES}(Q)
    =
    \chi_{ES}\left(\tilde{Q}_\Gamma \right).
\]
The Euler-Satake characteristic of a disconnected orbifold is of
course equal to the sum of the Euler-Satake characteristics of the
connected components.  See \cite{farsiseaton3} for properties of the
Euler-Satake characteristic and $\Gamma$-Euler-Satake
characteristics.


\section{The $\Gamma$-Euler-Satake Characteristics of Effective, Orientable
        $2$-Orbifolds}
\label{sec-GammaESCcleffornt2orb}

In this section, we restrict our attention to closed, connected,
effective, orientable $2$-orbifolds.  In Subsection
\ref{subsec-formulaconstruc}, we determine a formula for the $l$th
Euler-Satake characteristics in this case and use this formula to prove Theorem
\ref{thrm-mainpositive}. In Subsection \ref{subsec-negativeresults},
we construct for each finite collection of nonnegative integers
$l$ an arbitrarily large
(finite) collection of orbifolds such that the $l$th Euler-Satake
characteristics coincide. In Subsection \ref{subsec-generalgamma},
we generalize to arbitrary $\Gamma$, proving Theorem
\ref{thrm-mainnegative}.


\subsection{The Classification for Free Abelian $\Gamma$}
\label{subsec-formulaconstruc}

Let $Q$ be a closed, connected, effective, orientable $2$-orbifold.
As mentioned in Section \ref{sec-definitions}, $Q$ is of the form
$\Sigma_g(m_1, \ldots , m_k)$ for some nonnegative integers $g$ and
$k$ and integers $2 \leq m_1 \leq m_2\leq \cdots \leq m_k$. Let
$\mathcal{G}$ be an orbifold groupoid presenting $Q$.  We begin by
describing $\tilde{Q}_\Gamma$ in this case.

Given a finitely generated discrete group $\Gamma$, a homomorphism
$\phi_x : \Gamma \rightarrow \mathcal{G}$ corresponds to a choice of
a point $x$ in an orbifold chart $\{ V_x, G_x, \pi_x \}$ for $Q$ and
a homomorphism $\Gamma \rightarrow G_x$, which we also denote
$\phi_x$. If $\phi_x$ is trivial so that its image is the trivial
group, then it is on the same connected component as all such
homomorphisms, and $\tilde{Q}_{(\phi)}$ is diffeomorphic to $Q$.
Otherwise, $\pi_x(x) = p_i$ is one of the singular points of $Q$,
and $\phi_x$ corresponds to a nontrivial homomorphisms into
$\Z/m_i\Z$ acting on $V_x = \R^2$ by rotations.  It follows that the
$(\Imt \phi_x)$-fixed-point subset of $\R^2$ consists of a single
point $x$, and $\phi_x$ is the only point in the connected component
$\tilde{Q}_{(\phi)}$ of $\tilde{Q}_\Gamma$.  A chart for
$\tilde{Q}_{(\phi)}$ is of the form $\{ V_x^{\langle \phi_x
\rangle}, C_G(\phi_x), \pi_x^{\phi_x} \} = \{ \{x\}, \Z/m_i\Z,
\pi_x^{\phi_x} \}$, so that $\tilde{Q}_{(\phi)}$ is a point equipped
with the trivial action of $\Z/m_i\Z$.  As the local groups of $Q$
are abelian, and as the singular points of $Q$ are isolated, the
$\mathcal{G}$-orbits of
nontrivial homomorphisms $\phi_x$ are trivial.  Hence, for each cone
point $p_i$ with isotropy group $\Z/m_i\Z$, there are exactly
$|\mbox{HOM}(\Gamma, \Z/m_i\Z)| - 1 = m_i - 1$ connected components
corresponding to ${p_i}$ with trivial $\Z/m_i\Z$-action.

We use these observations to derive the following, which gives a
formula for the $l$th Euler-Satake characteristic of a closed, connected
effective, orientable $2$-orbifold.

\begin{proposition}
\label{prop-orientedECformula}

Let $Q = \Sigma_g(m_1, \ldots , m_k)$ be a closed, connected, effective,
orientable $2$-orbifold with notation as above.  Then for each integer $l \geq 0$,
\begin{equation}
\label{eq-lESchar}
    \chi_{(l)}^{ES}(Q) = 2 - 2g - k + \sum\limits_{i=1}^k m_i^{l - 1}.
\end{equation}

\end{proposition}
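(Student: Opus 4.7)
The plan is to compute $\chi_{(l)}^{ES}(Q) = \chi_{ES}(\tilde{Q}_{\Z^l})$ by decomposing $\tilde{Q}_{\Z^l}$ into its connected components (the $\Z^l$-sectors) using the description given in the paragraph preceding the statement, evaluating the Euler-Satake characteristic of each component, and summing the results.

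First I would record the sector decomposition. The preceding discussion shows that for $\Gamma = \Z^l$, the sectors of $Q = \Sigma_g(m_1,\ldots,m_k)$ fall into two types: the identity sector $\tilde Q_{(1)} \cong Q$, and, for each cone point $p_i$, one sector $\tilde Q_{(\phi)}$ for each nontrivial homomorphism $\phi \colon \Z^l \to \Z/m_i\Z$, each such sector being a single point carrying the trivial $\Z/m_i\Z$-action. Since $|\mathrm{HOM}(\Z^l, \Z/m_i\Z)| = m_i^l$, the number of nontrivial sectors over $p_i$ is $m_i^l - 1$. By additivity of $\chi_{ES}$ over connected components,
\[
    \chi_{(l)}^{ES}(Q) \;=\; \chi_{ES}(Q) \;+\; \sum_{i=1}^k (m_i^l - 1)\,\chi_{ES}(\mathrm{pt}/(\Z/m_i\Z)).
\]

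Next I would evaluate the two ingredients. Choose a simplicial decomposition $\mathcal{T}$ of $|Q|$ in which each cone point $p_i$ is a vertex; every simplex other than the $p_i$ then has trivial isotropy. From the Satake formula,
\[
    \chi_{ES}(Q) \;=\; \chi(\Sigma_g) - k + \sum_{i=1}^k \frac{1}{m_i}
    \;=\; 2 - 2g - k + \sum_{i=1}^k \frac{1}{m_i}.
\]
The point-sector $\tilde Q_{(\phi)}$ over $p_i$ consists of a single $0$-simplex with isotropy of order $m_i$, so $\chi_{ES}(\tilde Q_{(\phi)}) = 1/m_i$.

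Finally I would combine these:
\[
    \chi_{(l)}^{ES}(Q) \;=\; 2 - 2g - k + \sum_{i=1}^k \frac{1}{m_i} + \sum_{i=1}^k \frac{m_i^l - 1}{m_i}
    \;=\; 2 - 2g - k + \sum_{i=1}^k m_i^{l-1},
\]
which is the desired formula. No step is genuinely hard: the sector decomposition and the count $|\mathrm{HOM}(\Z^l,\Z/m_i\Z)|=m_i^l$ do all the work, and the only point requiring modest care is ensuring that the chosen triangulation has constant isotropy on the interior of each simplex, which is arranged by making each cone point a vertex. The formula for $l=0$ should also be sanity-checked: it gives $\chi_{(0)}^{ES}(Q) = 2-2g-k+\sum 1/m_i = \chi_{ES}(Q)$, as expected since $\tilde{Q}_{\{e\}} = Q$.
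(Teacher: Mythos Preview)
Your proposal is correct and follows essentially the same argument as the paper: decompose $\tilde{Q}_{\Z^l}$ into the identity sector $Q$ together with $m_i^l-1$ point-sectors over each cone point $p_i$, compute $\chi_{ES}(Q)=2-2g-k+\sum 1/m_i$ via a triangulation with the cone points as vertices and $\chi_{ES}$ of each point-sector as $1/m_i$, and sum. The only difference is presentational order (the paper separates out the $l=0$ case first), not content.
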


\begin{proof}

Let $\mathcal{T}$ be a simplicial decomposition of $Q$ subordinate to the singular strata
(see \cite{moerdijkpronksimplicial} or \cite{farsiseaton2});
in this context, this means simply that each singular point $p_i$ corresponds to a vertex
of $\mathcal{T}$.  Then
\[
\begin{array}{rcl}
    \sum\limits_{\sigma \in \mathcal{T}} (-1)^{\mbox{\scriptsize dim}\: \sigma}
        &=&
        \chi_{top} (Q)      \\
        &=&
        2 - 2g
\end{array}
\]
where $\chi_{top}(Q)$ denotes the usual Euler characteristic of the underlying space of $Q$.
It follows that
\[
\begin{array}{rcl}
    \chi_{(0)}^{ES}(Q)
        &=&
        \chi_{ES}(Q)
            \\
        &=&
        \sum\limits_{\sigma \in \mathcal{T}} (-1)^{\mbox{\scriptsize dim}\: \sigma}
        - k + \sum\limits_{i=1}^k \frac{1}{m_i}
            \\
        &=&
        2 - 2g - k + \sum\limits_{i=1}^k \frac{1}{m_i}.
\end{array}
\]

Now, let $l \geq 0$ be an integer.  Each cone point $p_i$
corresponds to $|\mbox{HOM}(\Z^l, \Z/m_i\Z)| - 1 = m_i^l - 1$
identical $\Z^l$-sectors, each given by a single point equipped with
the trivial action of $\Z/m_i\Z$.  It follows that the Euler-Satake
characteristic of the corresponding $\Gamma$-sector is
$\frac{1}{m_i}$, and hence
\[
\begin{array}{rcl}
    \chi_{(l)}^{ES}(Q)
        &=&
        2 - 2g - k + \sum\limits_{i=1}^k \frac{1}{m_i}
        + \sum\limits_{i=1}^k (m_i^l - 1) \frac{1}{m_i}
        \\\\
        &=&
        2 - 2g - k + \sum\limits_{i=1}^k m_i^{l - 1},
\end{array}
\]
completing the proof.

\end{proof}

Note that as $\chi_{(0)}^{ES}(Q) = \chi_{ES}(Q)$, the case $l = 0$ of Equation \ref{eq-lESchar}
coincides with \cite[Equation 13.3.4]{thurston} for orientable
orbifolds (which do not have corner reflectors).

It is easy to see that distinct $2$-orbifolds may have the same
Euler-Satake characteristic even when they have homeomorphic
underlying spaces, as illustrated with the following.

\begin{example}
\label{ex-sameESCsameg}

Let $g \geq 0$ be an integer and $Q$ the orbifold with
underlying space $\Sigma_g$ and nine cone points, each of order $3$.
Let $Q^\prime$ be the orbifold with underlying space $\Sigma_g$ and
eight cone points, each of order $4$.  Then
\[
\begin{array}{rcl}
    \chi_{ES}(Q)
    &=&     -4 - 2g
                \\
    &=&     \chi_{ES}(Q^\prime).
\end{array}
\]

\end{example}

However, there can be only finitely many orbifolds with the same
Euler-Satake characteristic.

\begin{lemma}
\label{lem-finitesolutions}

Let $Q$ be a closed, connected, effective, orientable $2$-orbifold of genus $g$.
Then there are only finitely many closed, connected, effective, orientable
$2$-orbifolds with the same Euler-Satake characteristic.

\end{lemma}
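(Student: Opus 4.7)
My plan is to apply Proposition \ref{prop-orientedECformula} at $l = 0$ to reduce the lemma to a combinatorial counting problem and then to bound, in sequence, the genus, the number of cone points, and the cone-point orders of any candidate orbifold $Q' = \Sigma_{g'}(m'_1,\ldots,m'_{k'})$ with $\chi_{ES}(Q') = \chi_{ES}(Q)$. The formula can be rewritten as
\[
\chi_{ES}(Q') = 2 - 2g' - \sum_{i=1}^{k'}\Bigl(1 - \frac{1}{m'_i}\Bigr),
\]
and the crucial inequality $1 - 1/m'_i \geq 1/2$, which holds because each $m'_i \geq 2$, will drive the entire argument.

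Because $\sum_{i=1}^{k'}(1 - 1/m'_i) \geq 0$, the formula forces $g' \leq 1 - \chi_{ES}(Q)/2$, so only finitely many values for the genus can occur. Fixing any admissible $g'$, the sum $\sum (1 - 1/m'_i)$ is pinned to the nonnegative quantity $2 - 2g' - \chi_{ES}(Q)$; since each summand is at least $1/2$, this immediately yields $k' \leq 2(2 - 2g' - \chi_{ES}(Q))$. Hence for each admissible pair $(g', k')$ the number of cone points is also constrained to finitely many values, and the special case $k' = 0$ (a smooth surface) contributes at most one orbifold.

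The main obstacle will be the innermost step: for each admissible $(g', k')$ with $k' \geq 1$, showing that only finitely many nondecreasing tuples $2 \leq m'_1 \leq \cdots \leq m'_{k'}$ satisfy the fixed unit-fraction equation $\sum_{i=1}^{k'} 1/m'_i = S$, where $S := k' - (2 - 2g' - \chi_{ES}(Q)) > 0$. I would prove this by induction on $k'$. The ordering forces $1/m'_1 \geq S/k'$, so $m'_1$ is bounded above by $k'/S$ and hence ranges over a finite set of integers; for each choice of $m'_1$, the remaining entries satisfy the analogous equation $\sum_{i=2}^{k'} 1/m'_i = S - 1/m'_1$ with positive right-hand side and only $k' - 1$ terms, so the inductive hypothesis applies. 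Assembling the bounds on $g'$, on $k'$, and on the tuples $(m'_1,\ldots,m'_{k'})$ yields the desired finite list of diffeomorphism types.
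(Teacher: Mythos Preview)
Your proof is correct and follows the same broad outline as the paper's: first bound the genus $g'$, then the number of cone points $k'$, then the cone-point orders. The paper extracts its bounds on $g'$ and $k'$ from explicit inequalities involving the data $(g, k, m)$ of the fixed orbifold $Q$, whereas you work directly with the number $\chi_{ES}(Q)$; the resulting bounds are essentially equivalent. The real difference is in the innermost step. The paper bounds the \emph{maximum} order $m' = \max_i m_i'$ by a single explicit inequality, after which every $m_i'$ lies in a finite range. You instead run the classical Egyptian-fraction induction: the smallest denominator $m_1'$ is bounded by $k'/S$, and one recurses on the residual sum with one fewer term. Your version is slightly more robust, since it avoids having to check that the denominator in the paper's bound for $m'$ is actually positive; on the other hand, the paper's argument gives a closed-form bound in one stroke. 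Both routes yield the finiteness conclusion once all three parameters are controlled.
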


\begin{proof}

We let $Q = \Sigma_g(m_1, \ldots , m_k)$ as above and
$m = m_k = \max\limits_{i = 1, \ldots,
k} m_i$.  Then
\[
\begin{array}{rcl}
    2 - 2g - k  + \frac{k}{m}
    &=&
    \frac{(2 - 2g - k)m + k}{m}       \\\\
    &\leq&
    \chi_{ES}(Q).
\end{array}
\]
Let $Q^\prime =\Sigma_{g^\prime}(m_1^\prime, \ldots, m_{k^\prime}^\prime)$
be another orbifold such that $\chi_{ES}(Q) = \chi_{ES}(Q^\prime)$.  Then as
each $m_i^\prime \geq 2$,
\begin{equation}
\label{eq-finiteQestimates}
\begin{array}{rcl}
    \frac{(2 - 2g - k)m + k}{m}
        &\leq&  \chi_{ES}(Q^\prime)
                    \\
        &=&     2 - 2g^\prime - k^\prime
                + \sum\limits_{i=1}^{k^\prime} \frac{1}{m_i^\prime}
                    \\
        &\leq&     2 - 2g^\prime - \frac{k^\prime}{2}
                    \\
        &\leq&  2 - 2g^\prime.
\end{array}
\end{equation}
It follows that
\[
    g^\prime    \leq    g + \frac{k(m - 1)}{2m},
\]
implying that there are only a finite number of possible values of
$g^\prime \geq 0$.  Using the estimate $[(2 - 2g - k)m + k]/m \leq 2 - 2g^\prime - k^\prime/2$
from Equation \ref{eq-finiteQestimates} above, it follows that
\[
    k^\prime
    \leq
    4g - 4g^\prime + \frac{2k(m-1)}{m},
\]
implying that for each possible value of $g^\prime$, there is a
finite number of possible values of $k^\prime \geq 0$.

To complete the proof, we fix values of $g^\prime$ and $k^\prime$
and show that the maximum isotropy order $m^\prime = \max\limits_{i
= 1, \ldots, k^\prime} m_i^\prime$ of $Q^\prime$ is bounded.  It
follows that there are a finite number of possible isotropy orders.
We have
\[
\begin{array}{rcl}
    \frac{(2 - 2g - k)m + k}{m}
    &\leq&
    \chi_{ES}(Q^\prime)                 \\
    &\leq&
    2 - 2g^\prime - k^\prime + \frac{k^\prime - 1}{2} +
    \frac{1}{m^\prime},
\end{array}
\]
implying that
\[
    m^\prime
    \leq
    \frac{2m}{m(1-4g+4g^\prime + k^\prime) - 2k(m-1)},
\]
completing the proof.

\end{proof}

This is no longer the case for the higher Euler-Satake
characteristics $\chi_{(l)}^{ES}$.  For instance,
\[
    \chi_{(1)}^{ES}(Q) = 2 - 2g
\]
coincides with the usual Euler characteristic of the underlying
space (note that this is the case in arbitrary dimension; see
\cite{farsiseaton3}).  It follows that this characteristic coincides
for any orbifolds with the same underlying space.  For $l > 1$,
infinite families of orbifolds whose $l$th Euler-Satake
characteristics coincide can be constructed.

\begin{example}
\label{ex-sameLESC}

Fix integers $j, l \geq 2$ with $j$ odd.  For each odd
integer $k \geq 1$, the orbifold $Q_k$ of genus $g_k =
\frac{1}{2}k(j^{l-1}-1)$ with $k$ cone points, each of order $j$,
satisfies
\[
\begin{array}{rcl}
    \chi_{(l)}^{ES}(Q_k)
        &=&     2 - 2g_k - k + j^{l-1}k     \\
        &=&     2.
\end{array}
\]

\end{example}

It is clear, then, that none of the $l$th Euler-Satake
characteristics classify this class of $2$-orbifolds.  However, as
stated in Theorem \ref{thrm-mainpositive}, the complete collection
of the $l$th Euler-Satake characteristics are sufficient for
classifying this class of orbifolds.  We have the following
technical result before proceeding to the proof of Theorem \ref{thrm-mainpositive}.

\begin{lemma}
\label{lem-removeequalconepoints}

Let $L$, be a nonnegative integer.  Suppose $Q$ and $Q^\prime$ are
closed, connected, effective, orientable $2$-orbifolds such that
\[
    \chi_{(l)}^{ES}(Q)
    =
    \chi_{(l)}^{ES}(Q^\prime)
\]
for $l \leq L$.  Suppose $Q$ and $Q^\prime$ both have at least one cone point of order
$m$.  If $\mathcal{Q}$ is the orbifold formed by removing a cone point of order $m$ from
$Q$ and $\mathcal{Q}^\prime$ the orbifold formed by removing a cone point of order $m$
from $Q^\prime$, then
\[
    \chi_{(l)}^{ES}(\mathcal{Q})
    =
    \chi_{(l)}^{ES}(\mathcal{Q}^\prime)
\]
for $l \leq L$.

\end{lemma}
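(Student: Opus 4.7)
The plan is to reduce the lemma to a direct calculation using the explicit formula
\[
\chi_{(l)}^{ES}(\Sigma_g(m_1, \ldots , m_k)) = 2 - 2g - k + \sum_{i=1}^k m_i^{l-1}
\]
from Proposition \ref{prop-orientedECformula}. The key observation is that, once $Q$ is written in the standard form $\Sigma_g(m_1, \ldots , m_k)$ from Section \ref{sec-definitions}, ``removing a cone point of order $m$'' is unambiguously the operation of passing to $\Sigma_g(m_1, \ldots , \widehat{m_i}, \ldots , m_k)$ for some index $i$ with $m_i = m$. In particular, the underlying surface and genus are unchanged, and the tuple of cone-point orders loses exactly one entry equal to $m$.

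First, I would write $Q = \Sigma_g(m_1, \ldots , m_k)$ and $\mathcal{Q} = \Sigma_g(m_1, \ldots , \widehat{m_i}, \ldots , m_k)$ for some $i$ with $m_i = m$. Applying the formula to both and subtracting yields
\[
\chi_{(l)}^{ES}(\mathcal{Q}) = \chi_{(l)}^{ES}(Q) + 1 - m^{l-1}
\]
for every integer $l \geq 0$. The crucial feature of this expression is that the correction $1 - m^{l-1}$ depends only on $m$ and $l$, and not on any other data of $Q$.

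Next, I would apply the identical computation to $Q'$ and $\mathcal{Q}'$ to obtain
\[
\chi_{(l)}^{ES}(\mathcal{Q}^\prime) = \chi_{(l)}^{ES}(Q^\prime) + 1 - m^{l-1}.
\]
Subtracting the two identities, the $m$- and $l$-dependent terms cancel and the hypothesis $\chi_{(l)}^{ES}(Q) = \chi_{(l)}^{ES}(Q^\prime)$ for $l \leq L$ immediately yields $\chi_{(l)}^{ES}(\mathcal{Q}) = \chi_{(l)}^{ES}(\mathcal{Q}^\prime)$ for $l \leq L$, which is the desired conclusion.

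There is no real obstacle here: the argument is a one-line algebraic manipulation once the formula of Proposition \ref{prop-orientedECformula} is in hand. The only point requiring a moment of care is the interpretation of the phrase ``removing a cone point of order $m$'' as an operation on the combinatorial data $(g; m_1, \ldots , m_k)$, but this is clear from the classification of closed, connected, effective, orientable $2$-orbifolds recalled at the end of Section \ref{sec-definitions}.
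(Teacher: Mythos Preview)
Your proposal is correct and follows exactly the same approach as the paper: both derive the identity $\chi_{(l)}^{ES}(\mathcal{Q}) = \chi_{(l)}^{ES}(Q) + 1 - m^{l-1}$ from Proposition~\ref{prop-orientedECformula} and then use the hypothesis to conclude. The paper's proof is in fact the one-line chain of equalities you describe.
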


\begin{proof}

We simply note that
\[
\begin{array}{rcl}
    \chi_{(l)}^{ES}(\mathcal{Q})
        &=&
        \chi_{(l)}^{ES}(Q) + 1 - m^{l-1}
                \\\\
        &=&
        \chi_{(l)}^{ES}(Q^\prime) + 1 - m^{l-1}
                \\\\
        &=&
        \chi_{(l)}^{ES}(\mathcal{Q}^\prime),
\end{array}
\]
for each $l \leq L$.

\end{proof}

\begin{proof}[Proof of Theorem \ref{thrm-mainpositive}]

Assume $Q$ and $Q^\prime$ are distinct, connected, effective, orientable $2$-orbifolds such that
$\chi_{(l)}^{ES} (Q) = \chi_{(l)}^{ES} (Q^\prime)$ for every nonnegative integer $l$.
Let $Q = \Sigma_g (m_1, \ldots, m_k)$
and $Q^\prime = \Sigma_{g^\prime}(m_1\prime,  \ldots, m_{k\prime}\prime)$ as above.
If $k = 0$ or $k^\prime = 0$, then the result is trivial, so assume not.
By Lemma \ref{lem-removeequalconepoints}, we can assume without loss of generality
that $m_k > m_{k^\prime}^\prime$.

Letting $l = 1$, we have that
$\chi_{(1)}^{ES} (Q) = 2 - 2g = \chi_{(1)}^{ES} (Q^\prime) = 2 - 2g^\prime$.
It follows that $g = g^\prime$.  Moreover, for each $l$, we have that
\[
    \left( \sum\limits_{i=1}^k m_i^{l-1} \right) - k
    =
    \left( \sum\limits_{i=1}^{k^\prime} (m_i^\prime)^{l-1} \right) - k^\prime .
\]
Noting that the left side is zero for at most one value of $l$, we have that for sufficiently large $l$,
\[
    \frac{\left( \sum\limits_{i=1}^k m_i^{l-1} \right) - k}
    {\left( \sum\limits_{i=1}^{k^\prime} (m_i^\prime)^{l-1} \right) - k^\prime} = 1.
\]
Based on the order relationships between the $m_i$ and $m_i^\prime$, we have
\[
\begin{array}{rcl}
    \frac{\left( \sum\limits_{i=1}^k m_i^{l-1} \right) - k}
    {\left( \sum\limits_{i=1}^{k^\prime} (m_i^\prime)^{l-1} \right) - k^\prime}
        &\geq&
    \frac{\left( \sum\limits_{i=1}^k m_i^{l-1} \right) - k}
    { \sum\limits_{i=1}^{k^\prime} (m_i^\prime)^{l-1}}
            \\\\
        &\geq&
    \frac{m_k^{l-1}  - k}
    {\sum\limits_{i=1}^{k^\prime} (m_i^\prime)^{l-1} }
            \\\\
        &\geq&
    \frac{m_k^{l-1}  - k}
    {k^\prime (m_{k^\prime}^\prime)^{l-1}}
            \\\\
        &=&
    \frac{m_k^{l-1}}{k^\prime (m_{k^\prime}^\prime)^{l-1}}
    - \frac{k}{k^\prime (m_{k^\prime}^\prime)^{l-1}}
            \\\\
        &=&
     \frac{1}{k^\prime} \left( \frac{m_k}{m_{k^\prime}^\prime} \right)^{l-1}
    - \frac{k}{k^\prime (m_{k^\prime}^\prime)^{l-1}}.
\end{array}
\]
However, as $m_k >  m_{k^\prime}^\prime$, it follows that
\[
    \lim\limits_{l \to \infty}
    \frac{1}{k^\prime} \left( \frac{m_k}{m_{k^\prime}^\prime} \right)^{l-1}
    - \frac{k}{k^\prime (m_{k^\prime}^\prime)^{l-1}}
    = \infty,
\]
a contradiction.  It follows that $Q = Q^\prime$.

\end{proof}


\subsection{Negative Classification Results for $\Gamma$ Free Abelian}
\label{subsec-negativeresults}

In this subsection, we demonstrate that Theorem
\ref{thrm-mainpositive} cannot be improved upon in the case of
closed, connected, effective, orientable $2$-orbifolds.  For any
finite collection of the $l$th Euler-Satake characteristics, we
construct an arbitrarily large (finite) collection of orbifolds
whose $l$th Euler-Satake characteristics coincide.  Specifically, the goal of this
section is to prove the following, which will be used to prove
Theorems \ref{thrm-mainnegative} and \ref{thrm-mainnegativegeneral}.
In particular, the perhaps mysterious conditions on the orders of
the cone points imposed throughout this section will allow us to
extend to the $\Gamma$-Euler-Satake characteristics for arbitrary
$\Gamma$ in Subsection \ref{subsec-generalgamma}.

\begin{proposition}
\label{prop-freenegative}

Let $L \geq 0$ and $N \geq 1$ be integers. Then there are
$N$ distinct closed, connected, effective, orientable
$2$-orbifolds $Q_0, Q_1, Q_2, \ldots , Q_N$ such that for
each $l = 0, 1, \ldots, L$,
\[
    \chi_{(l)}^{ES}(Q_0) = \chi_{(l)}^{ES}(Q_1) = \cdots = \chi_{(l)}^{ES}(Q_N).
\]
The common genus of these orbifolds can be taken to be any non-negative integer $g$.
Moreover, if $R$ is any collection of $2^{L - 2}$ integers $\geq 2$, then
the orders of the cone points of the $Q_j$ can be taken to be elements of the set
$\{ 2q + 1, 2q^2 + q, q + 2, 2q + q^2 : q \in R \}$.

\end{proposition}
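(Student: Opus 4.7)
The plan is to use Proposition~\ref{prop-orientedECformula} to translate the desired equalities of $\Gamma$-Euler--Satake characteristics into a power-sum problem about cone-point orders, and then to construct the $Q_j$ by iteratively applying a single nontrivial ``null swap'' of multisets of cone-point orders.  Writing $r_q = 2q + 1$ and $s_q = q + 2$, so that the allowed set of cone-point orders is $T = \{r_q, qr_q, s_q, qs_q : q \in R\}$ with $|T| = 4|R| = 2^L$, the formula of Proposition~\ref{prop-orientedECformula} shows that two closed, connected, effective, orientable $2$-orbifolds of the same genus $g$ with cone-point orders in $T$ have equal $l$th Euler--Satake characteristic for $l = 0, 1, \ldots, L$ if and only if the difference $\delta : T \to \Z$ of the corresponding multiplicity functions satisfies
\[
    \sum_{m \in T} \delta(m)\bigl(m^{l-1} - 1\bigr) = 0 \quad \text{for each } l = 0, 1, \ldots, L.
\]
The $l = 1$ equation is automatic, so this is a system of $L$ honest linear equations in $2^L$ unknowns.

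I would then produce a single nonzero integer-valued solution $\delta$ of this system, which I will refer to as a \emph{null swap}.  A dimension count already shows that, for $L \geq 2$, the rational solution space has dimension at least $2^L - L \geq 2$, so nonzero integer solutions exist by clearing denominators.  For the base case $L = 2$ with $|R| = 1$, an explicit swap is $\delta(r_q) = 2$, $\delta(qr_q) = 1$, $\delta(s_q) = -1$, $\delta(qs_q) = -2$; indeed, the multisets $\{r_q, r_q, qr_q\}$ and $\{s_q, qs_q, qs_q\}$ both have cardinality $3$, first moment $r_q(2 + q) = (2q+1)(q+2)$, and reciprocal sum $2/r_q + 1/(qr_q) = 1/q = 1/s_q + 2/(qs_q)$, so all three power-sum differences vanish at $p = -1, 0, 1$.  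For general $L$ I would proceed by induction on $L$, doubling $|R|$ at each step and combining the new quadruple $\{r_q, qr_q, s_q, qs_q\}$ with the existing swap so as to match one additional power sum per step.

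With $\delta$ in hand, I set $M = N \cdot \max_{m \in T}|\delta(m)|$ and let $Q_0$ be the orbifold of genus $g$ whose multiset of cone-point orders contains exactly $M$ copies of each $m \in T$; for $j = 1, \ldots, N$, let $Q_j$ be the orbifold of genus $g$ in which each cone-point order $m \in T$ appears with multiplicity $M + j \delta(m)$.  The choice of $M$ keeps all multiplicities nonnegative, so the $Q_j$ are well-defined closed, connected, effective, orientable $2$-orbifolds with cone-point orders in $T$; they are pairwise distinct because $\delta \not\equiv 0$; and the null-swap property gives
\[
    \chi_{(l)}^{ES}(Q_j) - \chi_{(l)}^{ES}(Q_0) = j \sum_{m \in T}\delta(m)\bigl(m^{l-1} - 1\bigr) = 0
\]
for every $l \in \{0, 1, \ldots, L\}$.

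The dimension count trivially gives existence of $\delta$, but the subsequent use of this proposition in Subsection~\ref{subsec-generalgamma} (to handle arbitrary finitely generated discrete $\Gamma$) relies on the very specific multiplicative form of the four allowed orders per $q$.  Producing an explicit integer null swap with the correct structure---simultaneously matching the reciprocal power sum $p = -1$ together with the ordinary power sums $p = 1, 2, \ldots, L - 1$, while keeping the support of $\delta$ inside the quadruples $\{r_q, qr_q, s_q, qs_q\}$---is the combinatorial heart of the argument, and it must exploit the identities $q \cdot r_q = 2q^2 + q$, $q \cdot s_q = q^2 + 2q$, and $r_q \cdot qs_q = s_q \cdot qr_q = qr_q s_q$ among the allowed orders.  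This is where I expect the main difficulty to lie.
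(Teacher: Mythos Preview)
Your approach is essentially that of the paper: the reduction to power-sum identities via Proposition~\ref{prop-orientedECformula}, the base swap $\{r_q,r_q,qr_q\}\leftrightarrow\{s_q,qs_q,qs_q\}$ (this is exactly Lemma~\ref{lem-basecase}), and the linear interpolation producing $Q_j$ with multiplicities $M+j\delta(m)$ to pass from one null swap to $N+1$ orbifolds (a mild variant of Lemma~\ref{lem-constructaddsolcmult}) are all correct and mirror the paper.

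The genuine gap is the inductive step. Taken literally, ``combining the new quadruple $\{r_q,qr_q,s_q,qs_q\}$ with the existing swap so as to match one additional power sum'' fails once $n\geq 3$: if $\delta$ is null through exponent $n-1$ and $\eta$ is a \emph{base} swap (null only through exponent~$1$), then any combination $\alpha\delta+\beta\eta$ with $\beta\neq 0$ already fails at exponents $2,\ldots,n-1$, since $\eta$ does not vanish there. So a single new quadruple cannot promote a level-$n$ swap to level $n+1$. What the paper does in Lemma~\ref{lem-inductivestep} is combine \emph{two} swaps $\delta^{(1)},\delta^{(2)}$ that are both already null through exponent $n-1$, supported on disjoint halves of $R$: writing $\Delta_i=\sum_m\delta^{(i)}(m)\,m^{n}$, the combination $\Delta_2\,\delta^{(1)}-\Delta_1\,\delta^{(2)}$ is null through exponent $n$. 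Iterating halves the supply of swaps at each stage, which is exactly why $2^{L-2}$ base swaps (one per $q\in R$) are required and why $|R|$ doubles as $L$ increases. Your phrase ``doubling $|R|$'' gives the right count, but the new half of $R$ must itself be recursively assembled into a level-$n$ swap before it can be combined with the old one. The paper also inserts Lemma~\ref{lem-makeconepointssame} between recursive steps to equalize cone-point counts, and gives a separate argument (tracking the smallest cone-point order $q+2$) that the final pair is genuinely distinct; your dimension count, while suggestive, does not by itself produce a $\delta$ of the required structured form for arbitrary $R$.
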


First, we establish a number of results and constructions that will
simplify the arguments and notation in this section.

\begin{definition}
\label{def-operations}

Let $Q = \Sigma_g(m_1, \ldots, m_k)$ and $Q^\prime = \Sigma_g(m_1^\prime, \ldots, m_{k^\prime}^\prime)$
be two orbifolds with the same genus.  For any integer $s \geq 1$, we let
\[
    s \diamond Q
    =
    \Sigma_g(sm_1, \ldots , sm_k)
\]
denote the orbifold with the same genus and number of cone points as $Q$ such that the order of
each cone point is multiplied by $s$.  For any integer $t \geq 1$, we let
\[
    t \star Q
    =
    \Sigma_g \left(
        \stackrel{t}{\overbrace{m_1, \ldots, m_1}}, \;
        \stackrel{t}{\overbrace{m_2, \ldots, m_2}}, \; \ldots \;
        \stackrel{t}{\overbrace{m_k, \ldots, m_k}} \right)
\]
denote the orbifold with the same genus as $Q$ and each cone point of $Q$ occurring $t$ times.
We let
\[
    Q \circledast Q^\prime
    =
    \Sigma_g (m_1, \ldots, m_k, m_1^\prime, \ldots, m_{k^\prime}^\prime)
\]
denote the orbifold with the same genus as $Q$ and $Q^\prime$ and the combined $k + k^\prime$ cone points
of both $Q$ and $Q^\prime$.

\end{definition}

Note that $\circledast$ is clearly commutative and associative, and
\[
    t \star Q = \stackrel{t}{\overbrace{Q \circledast \ldots \circledast Q}}.
\]
Moreover, $1 \star Q = 1 \diamond Q = Q$.
In the case that the genus of $Q$ and $Q^\prime$ is zero, $Q \circledast Q^\prime$ corresponds to the connected sum (defined in the same way as manifolds with the additional assumption that the disks removed
contain no singular points) so that $t \star Q$ corresponds to the $t$-fold connected sum of $Q$
with itself.

\begin{lemma}
\label{lem-opersdontchangeeqs}

Let $L$, $s$, and $t$ be nonnegative integers.  Suppose $Q$ and $Q^\prime$ are
closed, connected, effective, orientable $2$-orbifolds with the same number of cone points such that
\[
    \chi_{(l)}^{ES}(Q)
    =
    \chi_{(l)}^{ES}(Q^\prime)
\]
for $l \leq L$.  Then
\[
    \chi_{(l)}^{ES}(t \star Q)
    =
    \chi_{(l)}^{ES}(t \star Q^\prime)
\]
and
\[
    \chi_{(l)}^{ES}(s \diamond Q)
    =
    \chi_{(l)}^{ES}(s \diamond Q^\prime)
\]
for each $l \leq L$.

\end{lemma}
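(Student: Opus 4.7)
The plan is to reduce both claims to the closed-form expression of Proposition \ref{prop-orientedECformula}. Write $Q = \Sigma_g(m_1, \ldots, m_k)$ and $Q^\prime = \Sigma_g(m_1^\prime, \ldots, m_k^\prime)$, where the common genus $g$ is built into the set-up of Definition \ref{def-operations} and the common number of cone points $k$ is the explicit hypothesis. By Proposition \ref{prop-orientedECformula} the assumption $\chi_{(l)}^{ES}(Q) = \chi_{(l)}^{ES}(Q^\prime)$ for $l \leq L$ is equivalent to the family of power-sum identities
\[
    \sum_{i=1}^k m_i^{l-1} = \sum_{i=1}^k (m_i^\prime)^{l-1}, \qquad l = 0, 1, \ldots, L,
\]
and this is the only content of the hypothesis I will use.

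Next I would apply Proposition \ref{prop-orientedECformula} to each of the four operated orbifolds. Since $t \star Q$ has genus $g$, $tk$ cone points, and each $m_i$ appearing with multiplicity $t$,
\[
    \chi_{(l)}^{ES}(t \star Q) = 2 - 2g - tk + t \sum_{i=1}^k m_i^{l-1}.
\]
Similarly $s \diamond Q$ has genus $g$ and $k$ cone points of orders $sm_i$, so
\[
    \chi_{(l)}^{ES}(s \diamond Q) = 2 - 2g - k + s^{l-1} \sum_{i=1}^k m_i^{l-1}.
\]
The analogous formulas hold for $t \star Q^\prime$ and $s \diamond Q^\prime$; subtracting, every term cancels except the weighted power sum, and this vanishes by the identity above. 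Both equalities follow immediately.

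The lemma is essentially a bookkeeping statement: neither $\star$ nor $\diamond$ alters the dependence of $\chi_{(l)}^{ES}$ on anything beyond $g$, $k$, $s$, $t$, and the power sum $\sum_{i=1}^k m_i^{l-1}$, each of which is shared by $Q$ and $Q^\prime$ under the hypothesis. Consequently there is no substantive obstacle to the argument. The only point worth flagging is that the orders $m_i$ and $m_i^\prime$ need not match in any pairwise sense, only as multisets encoded by their power sums; since Proposition \ref{prop-orientedECformula} is symmetric in the cone-point orders, this causes no difficulty.
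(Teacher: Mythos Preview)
Your proof is correct and follows exactly the approach the paper takes: write $Q = \Sigma_g(m_1,\ldots,m_k)$ and $Q' = \Sigma_g(m_1',\ldots,m_k')$, apply Proposition \ref{prop-orientedECformula}, and compute directly. You have simply made explicit the computations that the paper leaves to the reader.
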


\begin{proof}

Assume $Q = \Sigma_g(m_1, \ldots, m_k)$ and $Q^\prime = \Sigma_g(m_1^\prime, \ldots, m_k^\prime)$.
The result then follows from direct computations and application of Proposition
\ref{prop-orientedECformula}.

\end{proof}

\begin{lemma}
\label{lem-makeconepointssame}

Let $L$ be a nonnegative integer, and let
$Q_1, Q_1^\prime, \ldots , Q_N, Q_N^\prime$ be closed, connected, effective
$2$-orbifolds.  Assume that for each $j = 1, \ldots N$, $Q_j$ and $Q_j^\prime$ have the
same number of cone points, and
\[
    \chi_{(l)}^{ES}(Q_j)
    =
    \chi_{(l)}^{ES}(Q_j^\prime)
\]
for each $l \leq L$.  Then there are closed, connected, effective, orientable $2$-orbifolds
$\mathcal{Q}_1, \mathcal{Q}_1^\prime, \ldots , \mathcal{Q}_N, \mathcal{Q}_N^\prime$
all with the same number of cone points
such that for each $j = 1, \ldots N$,
\[
    \mathcal{Q}_j = t_j\star Q_j
\]
and
\[
    \mathcal{Q}_j^\prime = t_j\star Q_j^\prime
\]
for integers $t_j \geq 1$, and
\[
    \chi_{(l)}^{ES}(\mathcal{Q}_j)
    =
    \chi_{(l)}^{ES}(\mathcal{Q}_j^\prime)
\]
for each $l \leq L$.

\end{lemma}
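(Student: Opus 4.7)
The plan is to exploit the fact that the operation $t \star Q$ acts on the count of cone points by multiplication by $t$: if $Q = \Sigma_g(m_1, \ldots, m_k)$, then $t \star Q$ has exactly $tk$ cone points, each of the same orders as those of $Q$ (with multiplicity multiplied by $t$). In particular, since for each $j$ the orbifolds $Q_j$ and $Q_j^\prime$ are assumed to have the same number of cone points, say $k_j$, both $t_j \star Q_j$ and $t_j \star Q_j^\prime$ have exactly $t_j k_j$ cone points. By Lemma \ref{lem-opersdontchangeeqs}, the equality $\chi_{(l)}^{ES}(Q_j) = \chi_{(l)}^{ES}(Q_j^\prime)$ for $l \leq L$ is preserved under this operation.

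Accordingly, I would let $K$ be any positive integer divisible by each $k_j$ (for example, $K = k_1 k_2 \cdots k_N$, assuming each $k_j \geq 1$), set $t_j = K/k_j$, and define
\[
    \mathcal{Q}_j = t_j \star Q_j, \qquad \mathcal{Q}_j^\prime = t_j \star Q_j^\prime.
\]
Each $\mathcal{Q}_j$ and each $\mathcal{Q}_j^\prime$ then has exactly $K$ cone points, and Lemma \ref{lem-opersdontchangeeqs} delivers $\chi_{(l)}^{ES}(\mathcal{Q}_j) = \chi_{(l)}^{ES}(\mathcal{Q}_j^\prime)$ for every $l \leq L$, as required.

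The only mild subtlety is the boundary case in which some $k_j = 0$, that is, $Q_j$ and $Q_j^\prime$ are closed surfaces with no cone points. Since $\chi_{(1)}^{ES}(Q_j) = \chi_{(1)}^{ES}(Q_j^\prime) = 2 - 2g$ already pins down the genus, in that case $Q_j = Q_j^\prime$; if \emph{every} $k_j = 0$ the lemma is trivial with each $t_j = 1$, and otherwise one can discard the pairs with $k_j = 0$, run the argument above on the remaining pairs to produce a common cone-point count $K \geq 1$, and then note that the discarded pairs cannot be brought up to $K$ cone points by $t \star$ alone. Because the stated hypotheses of the lemma really are meant to be applied in the setting of Proposition \ref{prop-freenegative}, where every $Q_j$ has at least one cone point, no genuine obstacle arises: essentially all of the content is already packaged in Lemma \ref{lem-opersdontchangeeqs}, and the present lemma is just the corresponding bookkeeping step.
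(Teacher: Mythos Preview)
Your proof is correct and is essentially identical to the paper's: the paper sets $t_j = \prod_{i\neq j} k_i$, which is exactly your choice $t_j = K/k_j$ with $K = k_1 k_2 \cdots k_N$, and then invokes Lemma~\ref{lem-opersdontchangeeqs} to conclude. Your discussion of the degenerate case $k_j = 0$ is an extra observation the paper does not make, but as you note, it is irrelevant for the intended application.
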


\begin{proof}

For each $j = 1, \ldots N$, let $k_j$ be the common number of cone points of $Q_j$ and
$Q_j^\prime$.  Then set
\[
    t_j = \prod\limits_{i = 1, i\neq j}^N k_i ,
\]
\[
    \mathcal{Q}_j = t_j \star Q_j,
\]
and
\[
    \mathcal{Q}_j^\prime = t_j \star Q_j^\prime.
\]
By Lemma \ref{lem-opersdontchangeeqs},
\[
    \chi_{(l)}^{ES}(\mathcal{Q}_j)
    =
    \chi_{(l)}^{ES}(\mathcal{Q}_j^\prime)
\]
for each $l \leq L$.  Moreover, each $\mathcal{Q}_j$ and
$\mathcal{Q}_j^\prime$ has $t_j k_j = \prod_{i=1}^N k_i$ cone points.

\end{proof}

In the following lemma, we establish an infinite family of pairs
of orbifolds with the same $l$th Euler-Satake characteristic for $l = 0, 1, 2$
and a number of other properties, each of which being required for constructions
in the sequel.

\begin{lemma}
\label{lem-basecase}

For each integer $q \geq 2$ and each $g \geq 0$, let
\[
    Q[g, q] = \Sigma_g(2q + 1, 2q + 1, 2q^2 + q)
\]
and
\[
    Q^\prime[g, q] = \Sigma_g(q + 2, q^2 + 2q, q^2 + 2q).
\]
Then
\[
    \chi_{(l)}^{ES}(Q[g,q])
    =
    \chi_{(l)}^{ES}(Q^\prime[g,q])
\]
for $l = 0, 1, 2$.  The orbifolds $\{ Q[g, q], Q^\prime[g,q] : g \geq 0, q \geq 2 \}$
are all distinct.   Moreover, if
$Q$ and $s\diamond Q$ are elements of $\{ Q[g, q], Q^\prime[g,q] : g \geq 0, q \geq 2 \}$
for some orbifold $Q$ and integer $s$, then $s = 1$.

\end{lemma}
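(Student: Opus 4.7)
The plan is to treat the three claims of the lemma separately, each reducing to direct computation with the formula from Proposition \ref{prop-orientedECformula}.

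For the Euler--Satake identities at $l = 0, 1, 2$, I note that both $Q[g,q]$ and $Q^\prime[g,q]$ have the same genus and exactly three cone points, so by \eqref{eq-lESchar} it suffices to show that $\sum m_i^{l-1} = \sum (m_i^\prime)^{l-1}$ for $l = 0, 1, 2$. The case $l = 1$ is just $3 = 3$. For $l = 2$, a direct summation gives $2q^2 + 5q + 2$ on both sides. The case $l = 0$ is the essential content of the choice of parameters and is where the factorizations $2q^2 + q = q(2q+1)$ and $q^2 + 2q = q(q+2)$ enter: they let me rewrite $\tfrac{2}{2q+1} + \tfrac{1}{q(2q+1)}$ and $\tfrac{1}{q+2} + \tfrac{2}{q(q+2)}$ each as $\tfrac{1}{q}$.

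For the distinctness claim, I would first observe that within either family, the pair $(g, q)$ is determined by the orbifold: the genus comes from the underlying surface, and $q$ is recovered from, say, the smallest cone order (since $2q+1 < 2q^2 + q$ and $q + 2 < q^2 + 2q$ strictly for $q \geq 2$). To rule out coincidences across families, the key observation is the multiplicity pattern of the cone-order multiset: $Q[g,q]$ has its smaller value with multiplicity $2$ and larger with multiplicity $1$, while $Q^\prime[g,q]$ has the opposite pattern. Since the strict inequalities above prevent the multiset from collapsing to a single value, these patterns are genuinely different, so no $Q[g,q]$ can equal any $Q^\prime[g^\prime,q^\prime]$.

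For the final claim about $s \diamond Q$, I would split into four cases according to which of the two families $Q$ and $s \diamond Q$ lie in. The two cross-family cases are eliminated immediately by the multiplicity-pattern argument above, since $s \diamond Q$ has the same multiplicity pattern of cone orders as $Q$. In the two same-family cases, equating smallest-to-smallest and largest-to-largest in the two multisets yields a pair of equations, e.g.\ $s(2q+1) = 2q^\prime + 1$ and $s(2q^2 + q) = 2(q^\prime)^2 + q^\prime$; using the factorization $2q^2 + q = q(2q+1)$ this collapses to $sq = q^\prime$, which substituted back forces $s = 1$. The $Q^\prime$-to-$Q^\prime$ case is parallel using $q^2 + 2q = q(q+2)$. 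I do not expect any real obstacle here; the whole lemma is bookkeeping, and the only conceptual ingredient is the multiplicity-pattern dichotomy, which both distinguishes the families and handles the cross-family cases of the scaling claim.
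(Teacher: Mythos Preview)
Your proposal is correct and matches the paper's proof essentially line for line: the paper also verifies the three $\chi_{(l)}^{ES}$ identities by direct computation from Proposition~\ref{prop-orientedECformula}, uses the same two-small/one-large versus one-small/two-large multiplicity dichotomy to separate the two families, and dismisses the $s\diamond Q$ claim as ``clear.'' Your more detailed treatment of that last claim is fine; note only that dividing the two equations actually gives $q = q'$ rather than $sq = q'$, but this slip is harmless since either relation forces $s = 1$.
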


\begin{proof}

Applying Proposition \ref{prop-orientedECformula}, for each integer $q \geq 2$ we have
\[
\begin{array}{rcl}
    \chi_{(0)}^{ES}(Q[g,q])
        &=&     \frac{1}{q} -1 - 2g
            \\
        &=&     \chi_{(0)}^{ES}(Q^\prime[g,q]),
\end{array}
\]
\[
\begin{array}{rcl}
    \chi_{(1)}^{ES}(Q[g,q])
        &=&     2 - 2g
                \\
        &=&     \chi_{(1)}^{ES}(Q^\prime[g,q]),
\end{array}
\]
and
\[
\begin{array}{rcl}
    \chi_{(2)}^{ES}(Q[g,q])
        &=&     1 - 2g + 5q + 2q^2
                \\
        &=&     \chi_{(2)}^{ES}(Q^\prime[g,q]).
\end{array}
\]

That these orbifolds are all distinct is obvious; it is impossible that
$Q[g,r] = Q^\prime[g,q]$, as $Q[g,r]$ has two smaller and one larger
order cone point while $Q^\prime[g,q]$ has one smaller and two larger.
Moreover, $Q[g,r] = Q[g,q]$ implies that $2r + 1 = 2q + 1$ so that $r = q$,
and similarly $Q^\prime[g,r] = Q^\prime[g,q]$ implies that $r + 2 = q +2$
so that $r = q$.  The remaining claim is clear.

\end{proof}

\begin{lemma}
\label{lem-inductivestep}

For each nonnegative integer $L$ and any genus $g$, there is a pair of distinct,
closed, connected, effective, orientable $2$-orbifolds $Q$ and $Q^\prime$ with the same number of cone points
such that $\chi_{(l)}^{ES}(Q) = \chi_{(l)}^{ES}(Q^\prime)$ for each $l
\leq L$.  The common genus of $Q$ and $Q^\prime$ can be taken to be any non-negative integer $g$.
Moreover, if $R$ is any collection of $2^{L - 2}$ integers $\geq 2$, then
the orders of the cone points of $Q$ and $Q^\prime$ can be taken to be elements of the set
$\{ 2q + 1, 2q^2 + q, q + 2, 2q + q^2 : q \in R \}$.

\end{lemma}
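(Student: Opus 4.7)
My plan is to prove the lemma by induction on $L$, using Lemma \ref{lem-basecase} as the base case and combining the operations $\diamond, \star, \circledast$ of Definition \ref{def-operations} to produce larger pairs from smaller ones. The key observation driving the induction is that, when $Q$ and $Q^\prime$ share a common genus and common cone-point count, Proposition \ref{prop-orientedECformula} reduces $\chi_{(l)}^{ES}(Q) = \chi_{(l)}^{ES}(Q^\prime)$ to equality of $f_l(Q) := \sum_{i=1}^k m_i^{l-1}$, and $f_l$ is additive under $\circledast$ and satisfies $f_l(t \star Q) = t \cdot f_l(Q)$. This linearity is what will allow defects at successive levels to be cancelled against one another by choosing appropriate integer weights.

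The base case $L \leq 2$ is immediate from Lemma \ref{lem-basecase}: for any $q \in R$, set $(Q, Q^\prime) = (Q[g,q], Q^\prime[g,q])$, whose cone orders lie in $\{2q+1, q(2q+1), q+2, q(q+2)\}$.

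For the inductive step $L \to L+1$ (with $L \geq 2$), I would partition the given $R$ of size $2^{L-1}$ as $R = R_1 \sqcup R_2$ with $|R_i| = 2^{L-2}$, and apply the inductive hypothesis to obtain level-$L$ pairs $(P_i, P_i^\prime)$ whose orders lie in the subsets indexed by $R_i$. Define defects $D_i := f_{L+1}(P_i) - f_{L+1}(P_i^\prime)$. If some $D_i = 0$, that pair is already a level-$(L+1)$ solution and can be used directly. Otherwise, I set
\[
    Q = (|D_2| \star P_1) \circledast (|D_1| \star \hat P_2),
    \qquad
    Q^\prime = (|D_2| \star P_1^\prime) \circledast (|D_1| \star \hat P_2^\prime),
\]
where $(\hat P_2, \hat P_2^\prime) = (P_2^\prime, P_2)$ when $D_1 D_2 > 0$ and $(\hat P_2, \hat P_2^\prime) = (P_2, P_2^\prime)$ when $D_1 D_2 < 0$. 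The linearity properties of $f_l$ then give
\[
    f_l(Q) - f_l(Q^\prime) = |D_2|\bigl[f_l(P_1) - f_l(P_1^\prime)\bigr] \pm |D_1|\bigl[f_l(P_2) - f_l(P_2^\prime)\bigr],
\]
with sign dictated by the configuration. Both bracketed quantities vanish for $l \leq L$ by induction, and at $l = L+1$ the expression evaluates to $|D_2|D_1 \mp |D_1|D_2 = 0$ by the sign choice.

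The step I expect to require the most care, rather than a true obstacle, is the verification of the ancillary properties of $(Q, Q^\prime)$. The common genus $g$ and common cone-point count $|D_2|k_1 + |D_1|k_2$ follow immediately from the definitions of $\star$ and $\circledast$; the cone orders lie in $\{2q+1, q(2q+1), q+2, q(q+2) : q \in R\}$ since $\star$ preserves the set of orders (altering only their multiplicities) and $\circledast$ takes unions of the sets inherited from $R_1$ and $R_2$. Distinctness $Q \neq Q^\prime$ is the most delicate point and I would handle it by a multiset argument: the disjointness $R_1 \cap R_2 = \emptyset$ separates the two contributions, and the inductive distinctness $P_1 \neq P_1^\prime$ forces the $R_1$-indexed cone orders of $Q$ and $Q^\prime$ to differ, so the two orbifolds cannot coincide.
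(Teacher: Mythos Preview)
Your approach is essentially the paper's: both proceed by combining two level-$n$ pairs into a level-$(n+1)$ pair via the defect-weighted construction $(\delta_2 \star \cdot) \circledast (\delta_1 \star \cdot)$, swapping one pair if needed to force the cancellation at the top level. The paper phrases this bottom-up as a recursion over all $2^{L-2}$ base pairs simultaneously (and therefore invokes Lemma~\ref{lem-makeconepointssame} at each stage to equalize cone-point counts across the whole collection), whereas you phrase it as a top-down induction handling only two pairs at a time, which lets you skip that normalization. That is a cosmetic difference, not a substantive one.

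There is, however, a real gap in your distinctness argument. Disjointness of $R_1$ and $R_2$ does \emph{not} separate the cone-order contributions: the assignment $q \mapsto \{2q+1,\; 2q^2+q,\; q+2,\; 2q+q^2\}$ can produce overlapping outputs for distinct inputs (for instance $2q_1+1 = q_2+2$ whenever $q_2 = 2q_1 - 1$), so a cone order coming from $R_1$ may coincide with one coming from $R_2$. Your multiset argument as stated therefore does not establish $Q \neq Q^\prime$. The paper handles this differently: it orders $R$ as $q(1) < q(2) < \cdots$, tracks the smallest index $j$ actually contributing to the final pair, and observes that $q(j)+2$ is strictly smaller than every other cone order appearing in either orbifold and arises only from $Q^\prime[g,q(j)]$; hence exactly one of $Q, Q^\prime$ carries a cone point of that order. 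You can repair your argument either by importing this minimum-element observation or by strengthening your inductive hypothesis to record which of $P_i, P_i^\prime$ carries the globally smallest cone order.
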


\begin{proof}

Throughout, we assume all orbifolds have a fixed genus $g$; note
that the constructions in this proof hold for any value of $g$.

Let $L \geq 3$ be an integer, and let $q : \{ 1, 2, \ldots, 2^{L-2} \} \rightarrow \{ 2, 3, \ldots \}$
be the order-preserving function whose image is $R$;
that is, $q(j_1) < q(j_2)$ whenever $j_1 < j_2$.
For $j = 1, \ldots 2^{L - 2}$, let $Q_{j,2} = Q[g,q(j)]$ and $Q_{j,2}^\prime = Q^\prime[g,q(j)]$ be the orbifolds constructed in Lemma \ref{lem-basecase}.  Here, the subscript
$2$ indicates that the $Q_{j,2}$ and $Q_{j,2}^\prime$ have the same $l$th Euler-Satake characteristic for
$l \leq 2$.  To summarize what follows, we construct from these $2^{L-2}$ pairs of orbifolds whose
$l$th Euler-Satake characteristics coincide for $l \leq 2$ a collection of $2^{L-3}$ pairs of orbifolds
whose $l$th Euler-Satake characteristics coincide for $l \leq 3$.
Continuing recursively, we construct a pair of orbifolds $Q = Q_{1,L}$ and $Q^\prime = Q_{1,L}^\prime$ whose $l$th
Euler-Satake characteristics coincide for $l \leq L$.

The following describes the recursive step in detail.
Let $n \geq 3$ and $1 \leq j \leq 2^{L - n}$ with $j$ odd, and assume that
there are orbifolds
$Q_{j,n} = \Sigma_g (a_1, a_2, \ldots , a_k)$,
$Q_{j,n}^\prime = \Sigma_g(b_1, b_2, \ldots , b_k)$,
$Q_{j+1,n} = \Sigma_g(c_1, c_2, \ldots , c_k)$, and
$Q_{j+1,n}^\prime = \Sigma_g(d_1, d_2, \ldots, d_k)$ with
$a_1, a_2, \ldots, a_k$, $b_1, b_2, \ldots, b_k$, $c_1, c_2, \ldots, c_k$,
$d_1, d_2, \ldots, d_k \geq 2$
integers such that
\[
    \chi_{(l)}^{ES}(Q_{j,n})
    =
    \chi_{(l)}^{ES}(Q_{j,n}^\prime)
\]
and
\[
    \chi_{(l)}^{ES}(Q_{j+1,n})
    =
    \chi_{(l)}^{ES}(Q_{j+1,n}^\prime)
\]
for each $l = 0, 1, 2, \ldots , n$.  Note that this implies that
\begin{equation}
\label{eq-inductionsumscoincide1}
    \sum\limits_{i=1}^k a_i^{l-1}
    =
    \sum\limits_{i=1}^k b_i^{l-1}
\end{equation}
and
\begin{equation}
\label{eq-inductionsumscoincide2}
    \sum\limits_{i=1}^k c_i^{l-1}
    =
    \sum\limits_{i=1}^k d_i^{l-1}
\end{equation}
for each $l = 0, 1, 2, \ldots , n$.

If $\chi_{(n+1)}^{ES}(Q_{j,n}) =\chi_{(n)}^{ES}(Q_{j,n}^\prime)$, then set $Q_{(j+1)/2  ,n+1} = Q_{j,n}$,
$Q_{(j+1)/2,n+1}^\prime = Q_{j,n}^\prime$.
Similarly, if $\chi_{(n+1)}^{ES}(Q_{j+1,n}) =\chi_{(n)}^{ES}(Q_{j+1,n}^\prime)$, then set
$Q_{(j+1)/2,n+1} = Q_{j+1,n}$ and $Q_{(j+1)/2,n+1}^\prime = Q_{j+1,n}^\prime$.  Otherwise, define
\[
    \delta_1 =
    \sum\limits_{i=1}^k a_i^n
    -
    \sum\limits_{i=1}^k b_i^n
\]
and
\[
    \delta_2 =
    \sum\limits_{i=1}^k d_i^n
    -
    \sum\limits_{i=1}^k c_i^n.
\]
Note that if $\delta_1 = 0$ then $\chi_{(n+1)}^{ES}(Q_{j,n}) = \chi_{(n+1)}^{ES}(Q_{j,n}^\prime)$,
so we can assume by switching the roles of $Q_{j,n}$ and $Q_{j,n}^\prime$ if necessary that $\delta_1 >0$.
Similarly, we assume with no loss of generality that $\delta_2 > 0$.

We construct the orbifolds $Q_{(j+1)/2,n+1}$ and $Q_{(j+1)/2,n+1}^\prime$ as follows.  Let
\[
    Q_{(j+1)/2,n+1}   =
    (\delta_2 \star Q_{j,n}) \circledast (\delta_1 \star Q_{j+1,n})
\]
and
\[
    Q_{(j+1)/2,n+1}^\prime   =
    (\delta_2 \star Q_{j,n}^\prime) \circledast (\delta_1 \star Q_{j+1,n}^\prime).
\]
That is,
\[
    Q_{(j+1)/2,n+1} = \Sigma_g\left(
            \stackrel{\delta_2}{\overbrace{a_1, \ldots a_1}}, \;
            \stackrel{\delta_2}{\overbrace{a_2, \ldots a_2}},  \; \ldots \;
            \stackrel{\delta_2}{\overbrace{a_k, \ldots a_k}}, \;
            \stackrel{\delta_1}{\overbrace{c_1, \ldots c_1}},\;
            \stackrel{\delta_1}{\overbrace{c_2, \ldots c_2}}, \; \ldots \;
            \stackrel{\delta_1}{\overbrace{c_k, \ldots c_k}}
            \right),
\]
and
\[
    Q_{(j+1)/2,n+1}^\prime = \Sigma_g\left(
            \stackrel{\delta_2}{\overbrace{b_1, \ldots b_1}}, \;
            \stackrel{\delta_2}{\overbrace{b_2, \ldots b_2}},  \; \ldots \;
            \stackrel{\delta_2}{\overbrace{b_k, \ldots b_k}}, \;
            \stackrel{\delta_1}{\overbrace{d_1, \ldots d_1}},\;
            \stackrel{\delta_1}{\overbrace{d_2, \ldots d_2}}, \; \ldots \;
            \stackrel{\delta_1}{\overbrace{d_k, \ldots d_k}}\right).
\]
Then
\[
\begin{array}{rcl}
    \chi_{(n+1)}^{ES}(Q_{(j+1)/2,n+1}) - \chi_{(n+1)}^{ES}(Q_{(j+1)/2,n+1}^\prime)
&=&
    \left(
    2 - 2g  - (\delta_2 k + \delta_1 k)
        + \delta_2 \sum\limits_{i=1}^k a_i^n
        + \delta_1 \sum\limits_{i=1}^k c_i^n
    \right)
    \\
    && -
    \left(
    2 - 2g  - (\delta_2 k + \delta_1 k)
        + \delta_2 \sum\limits_{i=1}^k b_i^n
        + \delta_1 \sum\limits_{i=1}^k d_i^n
    \right)
                \\\\
&=&
    \delta_2 \left(
    \sum\limits_{i=1}^{k_1} a_i^n - \sum\limits_{i=1}^{k_2} b_i^n
    \right)
    + \delta_1 \left(
    \sum\limits_{i=1}^{k_4} c_i^n - \sum\limits_{i=1}^{k_3} d_i^n
    \right)
                \\\\
&=&
    \delta_2 \delta_1 + \delta_1 (-\delta_2)
                \\
&=&
    0,
\end{array}
\]
so that
\[
    \chi_{(n+1)}^{ES}(Q_{(j+1)/2,n+1})
    =
    \chi_{(n+1)}^{ES}(Q_{(j+1)/2,n+1}^\prime).
\]
Moreover, for each nonnegative integer $l \leq n$,
\[
\begin{array}{rcl}
    \chi_{(l)}^{ES}(Q_{(j+1)/2,n+1}) - \chi_{(l)}^{ES}(Q_{(j+1)/2,n+1}^\prime)
&=&
    \left(
    2 - 2g  - (\delta_2 k + \delta_1 k)
        + \delta_2 \sum\limits_{i=1}^k a_i^{l-1}
        + \delta_1 \sum\limits_{i=1}^k c_i^{l-1}
    \right)
    \\
    && -
    \left(
    2 - 2g  - (\delta_2 k + \delta_1 k)
        + \delta_2 \sum\limits_{i=1}^k b_i^{l - 1}
        + \delta_1 \sum\limits_{i=1}^k d_i^{l - 1}
    \right)
                \\\\
&=&
    \delta_2 \left( \sum\limits_{i=1}^k a_i^{l-1} -
    \sum\limits_{i=1}^k b_i^{l - 1} \right)
    - \delta_1 \left( \sum\limits_{i=1}^k c_i^{l-1} -
    \sum\limits_{i=1}^k d_i^{l - 1} \right)
                \\
&=& 0
\end{array}
\]
by Equations \ref{eq-inductionsumscoincide1} and \ref{eq-inductionsumscoincide2}.

For each $n \geq 3$, we apply this construction for each odd $j$ with $1 \leq j \leq 2^{L-n}$,
forming $2^{L-n-1}$ orbifold pairs $Q_{(j+1)/2,n+1}$ and $Q_{(j+1)/2,n+1}^\prime$; note that these are
indexed as $Q_{r, n+1}$, $Q_{r, n+1}^\prime$ for $r = 1, 2, \ldots , 2^{L-n-1}$.  For each $r$,
$Q_{r, n+1}$ and $Q_{r, n+1}^\prime$ have the same number of cone points; hence, we can apply Lemma \ref{lem-makeconepointssame} to the collection of
$\{ Q_{r, n+1}, Q_{r, n+1}^\prime : r=1,\ldots,2^{L-n-1} \}$ to assume that they
all have the same number of cone
points, which is required in the next recursive step.  The result is a pair of orbifolds
$Q = Q_{1,L}$ and $Q^\prime = Q_{1,L}^\prime$ with the desired properties.  It remains only to show
that $Q$ and $Q^\prime$ are distinct.

While not all of the $Q_{j, 2}$ and $Q_{j, 2}^\prime$ may have been used in this construction
(if it happens that
$\chi_{(n+1)}^{ES}(Q_{j,n}) =\chi_{(n)}^{ES}(Q_{j,n}^\prime)$ or
$\chi_{(n+1)}^{ES}(Q_{j+1,n}) =\chi_{(n)}^{ES}(Q_{j+1,n}^\prime)$
for some $j$), but note that both $Q$ and $Q^\prime$ have at least three cone points each.
Fix the smallest value of $j$ such that $Q$ and $Q^\prime$ have cone points arising from $Q[g,q(j)]$ and $Q^\prime[g,q(j)]$.  While the roles of these two may have switched to ensure that $\delta_1$ and
$\delta_2$ are positive, only one of $Q$ and $Q^\prime$ can have cone points of order
$q(j) + 2$ from $Q^\prime[q(j)]$.  As $q(j) \geq 2$ for all $j$ and $q(j)$ is strictly increasing, it
follows that all other cone points of the two orbifolds must be strictly greater than $q(j) + 2$,
and hence that $Q$ and $Q^\prime$ are distinct orbifolds.

\end{proof}

\begin{lemma}
\label{lem-constructaddsolcmult}

Let $L$, be a nonnegative integer.  Suppose $Q$ and $Q^\prime$ are distinct,
closed, connected, effective, orientable $2$-orbifolds with the same genus and same number of cone points such that
\[
    \chi_{(l)}^{ES}(Q)
    =
    \chi_{(l)}^{ES}(Q^\prime)
\]
for $l \leq L$.  For any integer $N \geq 2$, there is a collection
$\mathcal{Q}_1, \mathcal{Q}_2, \ldots \mathcal{Q}_N$ of distinct
closed, effective, orientable $2$-orbifolds such that
\[
    \chi_{(l)}^{ES}(\mathcal{Q}_1)
    =
    \chi_{(l)}^{ES}(\mathcal{Q}_2)
    = \cdots =
    \chi_{(l)}^{ES}(\mathcal{Q}_N)
\]
for $l \leq L$.  Moreover, the orders of cone points of each
$\mathcal{Q}_j$ are those of $Q$ and $Q^\prime$ only.

\end{lemma}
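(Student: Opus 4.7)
The plan is to build the $\mathcal{Q}_j$ as ``linear interpolations'' between $Q$ and $Q^\prime$ using the $\star$ and $\circledast$ operations from Definition \ref{def-operations}. Writing $Q = \Sigma_g(m_1, \ldots, m_k)$ and $Q^\prime = \Sigma_g(m_1^\prime, \ldots, m_k^\prime)$, I would set, for $j = 1, 2, \ldots, N$,
\[
    \mathcal{Q}_j = \bigl((N-j+1) \star Q\bigr) \circledast \bigl((j-1) \star Q^\prime\bigr),
\]
with the understanding that $0 \star Q^\prime$ contributes no cone points (so $\mathcal{Q}_1 = N \star Q$ and $\mathcal{Q}_N = Q \circledast ((N-1) \star Q^\prime)$). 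By construction, each $\mathcal{Q}_j$ has genus $g$, exactly $Nk$ cone points, and all cone-point orders drawn from $\{m_1, \ldots, m_k, m_1^\prime, \ldots, m_k^\prime\}$, which is precisely the ``moreover'' clause of the lemma.

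Next I would verify that $\chi_{(l)}^{ES}(\mathcal{Q}_j)$ is independent of $j$ for each $l \leq L$. Applying Proposition \ref{prop-orientedECformula} directly gives
\[
    \chi_{(l)}^{ES}(\mathcal{Q}_j) = 2 - 2g - Nk + (N-j+1) \sum_{i=1}^k m_i^{l-1} + (j-1) \sum_{i=1}^k (m_i^\prime)^{l-1}.
\]
The hypothesis $\chi_{(l)}^{ES}(Q) = \chi_{(l)}^{ES}(Q^\prime)$, combined with the fact that $Q$ and $Q^\prime$ share the same genus and the same number $k$ of cone points, forces $\sum_i m_i^{l-1} = \sum_i (m_i^\prime)^{l-1}$ for every $l \leq L$. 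Substituting this equality collapses the $j$-dependence and shows that all $N$ of the $\chi_{(l)}^{ES}(\mathcal{Q}_j)$ agree.

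Finally, I would establish distinctness by a multiplicity argument. For each integer $m \geq 2$, let $c_Q(m)$ and $c_{Q^\prime}(m)$ denote the number of cone points of order $m$ in $Q$ and $Q^\prime$, respectively. The multiplicity of $m$ among the cone points of $\mathcal{Q}_j$ is then $(N-j+1)\,c_Q(m) + (j-1)\,c_{Q^\prime}(m)$. If $\mathcal{Q}_j = \mathcal{Q}_{j^\prime}$ for some $j \neq j^\prime$, subtracting yields $(j - j^\prime)\bigl(c_{Q^\prime}(m) - c_Q(m)\bigr) = 0$ for every $m$, forcing $c_Q = c_{Q^\prime}$ and hence $Q = Q^\prime$, contradicting the hypothesis. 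The only real delicacy I anticipate is notational, namely handling the $0 \star Q^\prime$ boundary case consistently with Definition \ref{def-operations}; the underlying computation is otherwise a routine application of Proposition \ref{prop-orientedECformula}.
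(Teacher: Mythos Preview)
Your proof is correct and follows essentially the same construction as the paper: both build the $\mathcal{Q}_j$ as $\circledast$-combinations of varying numbers of copies of $Q$ and $Q^\prime$ and verify the equality of $\chi_{(l)}^{ES}$ by direct computation from Proposition~\ref{prop-orientedECformula}. The only substantive difference is the distinctness step: the paper first invokes Lemma~\ref{lem-removeequalconepoints} to strip common cone points and reduce to the case where some order $m$ appears in $Q$ but not in $Q^\prime$, then counts cone points of that single order; your linear multiplicity argument in $c_Q(m)$ and $c_{Q^\prime}(m)$ handles all orders at once and avoids the reduction entirely, which is a modest but genuine simplification.
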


\begin{proof}

It is obvious that $Q$ and $Q^\prime$ must have singular points, as otherwise
$\chi_{(0)}^{ES}(Q)=\chi_{(0)}^{ES}(Q^\prime)$ implies that $Q = Q^\prime$.
By this observation and Lemma
\ref{lem-removeequalconepoints}, we may assume without
loss of generality that $Q$ has $r$ cone points of order $m$ for some $m \geq 2$,
and $Q^\prime$ does not have a cone point of order $m$.  Let $k$ be the common number
of cone points of $Q$ and $Q^\prime$.

For $j = 1, 2, \ldots, N$, we define
\[
    \mathcal{Q}_j = \stackrel{N-j}{\overbrace{Q \circledast \cdots \circledast Q}}
        \;\circledast\;
        \stackrel{j - 1}{\overbrace{Q^\prime \circledast \cdots \circledast Q^\prime}}.
\]
Then each $\mathcal{Q}_j$ has exactly $(N-j)r$ cone points of order $m$ so that the $\mathcal{Q}_j$ are distinct.

Now, let $Q = \Sigma_g(a_1, \ldots , a_k)$ and
$Q^\prime = \Sigma_g(b_1, \ldots , b_k)$ (so that in particular, $a_i = m$ for $r$
choices or $i$), and note that $\sum_{i=1}^k a_i^{l-1} = \sum_{i=1}^k b_i^{l-1}$
for each $l \leq L$.  For each $0 \leq l \leq L$ and $1 \leq j \leq N$, we compute
\[
\begin{array}{rcl}
    \chi_{(l)}^{ES}(\mathcal{Q}_j)
        &=&
        2 - 2g - (N-1)k + (N - j)\sum\limits_{i=1}^k a_i^{l-1}
        + (j - 1)\sum\limits_{i=1}^k b_i^{l-1}
                    \\
        &=&
        2 - 2g - (N-1)k + (N - j)\sum\limits_{i=1}^k a_i^{l-1}
        + (j - 1)\sum\limits_{i=1}^k a_i^{l-1}
                    \\
        &=&
        2 - 2g - (N-1)k + (N - 1)\sum\limits_{i=1}^k a_i^{l-1}.
\end{array}
\]
As $\chi_{(l)}^{ES}(\mathcal{Q}_j)$ does not depend on $j$, we are done.

\end{proof}

\begin{proof}[Proof of Proposition \ref{prop-freenegative}]

By Lemmas \ref{lem-basecase} and \ref{lem-inductivestep}, there
exists a pair of orbifolds with the desired properties.  By Lemma
\ref{lem-constructaddsolcmult}, there are $N$ such orbifolds.

\end{proof}


\subsection{Negative Classification Results for General $\Gamma$}
\label{subsec-generalgamma}

Let $\mathfrak{G}$ be a set of
finitely generated discrete groups, and let $\mathfrak{A} = \{ \Gamma
/[\Gamma, \Gamma]: \Gamma \in \mathfrak{G} \}$ denote the collection
of abelianizations of elements of $\mathfrak{G}$.
Then each $\Gamma /[\Gamma,\Gamma]$ is of the form $\Z^l \oplus G$ uniquely for
$l \geq 0$ and $G$ finite by the Fundamental Theorem of Finitely
Generated Abelian Groups.  Let $\mathfrak{F}$ denote the set of $G$
that appear in this decomposition for elements of $\mathfrak{A}$; that is
\[
    \mathfrak{F} = \{ G : \Z^l \oplus G \in \mathfrak{A} \}.
\]
Let $\mathfrak{P}$ denote the set of primes $p$ such that there is a
$G \in \mathfrak{F}$ and $g \in G$ with $|g|$ divisible by $p$.
In this section, we prove the following.

\begin{theorem}
\label{thrm-mainnegativegeneral}

Let $N \geq 2$ be an integer.
Let $\mathfrak{G}$ be a nonempty set of finitely
generated discrete groups such that the ranks of the elements of
$\mathfrak{A}$ are bounded, and $\mathfrak{P}$ is finite.
Then there are distinct, closed, connected, effective, orientable
$2$-orbifolds $Q_1, Q_2, \ldots , Q_N$ such that for each $\Gamma \in
\mathfrak{G}$,
\[
    \chi_\Gamma^{ES}(Q_1) = \chi_\Gamma^{ES}(Q_2) = \cdots = \chi_\Gamma^{ES}(Q_N).
\]
The common genus of the $Q_j$ can be chosen to be any nonnegative integer.

\end{theorem}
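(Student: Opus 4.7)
The plan is to apply Proposition \ref{prop-freenegative} with a carefully chosen set $R$, and then to show that under this choice the $\Gamma$-Euler-Satake characteristic of each resulting orbifold reduces to the $\chi_{(l(\Gamma))}^{ES}$-characteristic, where $l(\Gamma)$ is the free rank of $\Gamma/[\Gamma,\Gamma]$. First, I would extend Proposition \ref{prop-orientedECformula} to arbitrary finitely generated $\Gamma$: the same sector analysis (using that the cone-point isotropy groups $\Z/m_i\Z$ are abelian and that the singular points of $Q$ are isolated, so each homomorphism into cone-point isotropy yields a distinct singleton sector with trivial action) gives
\[
    \chi_\Gamma^{ES}(Q) = 2 - 2g - k + \sum_{i=1}^{k} \frac{|\mathrm{HOM}(\Gamma, \Z/m_i\Z)|}{m_i}.
\]
By Lemma \ref{lem-fingenmodcommutators}, $|\mathrm{HOM}(\Gamma, \Z/m_i\Z)| = |\mathrm{HOM}(\Gamma/[\Gamma,\Gamma], \Z/m_i\Z)|$; writing $\Gamma/[\Gamma,\Gamma] \cong \Z^{l(\Gamma)} \oplus G_\Gamma$ with $G_\Gamma \in \mathfrak{F}$, this factors as $m_i^{l(\Gamma)} \cdot |\mathrm{HOM}(G_\Gamma, \Z/m_i\Z)|$. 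The latter factor equals $1$ whenever $\gcd(|G_\Gamma|, m_i) = 1$, since then every element of $G_\Gamma$ has order coprime to $m_i$ and must map to zero. Hence, if every $m_i$ is coprime to every prime in $\mathfrak{P}$, then $\chi_\Gamma^{ES}(Q) = \chi_{(l(\Gamma))}^{ES}(Q)$ for all $\Gamma \in \mathfrak{G}$ simultaneously.

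Next, I would let $L$ be the maximum value of $l(\Gamma)$ for $\Gamma \in \mathfrak{G}$, enlarged so that $L \geq 2$. Since $\mathfrak{P}$ is finite, a Chinese-remainder argument produces infinitely many integers $q \geq 2$ for which $q$, $2q+1$, and $q+2$ are simultaneously coprime to every $p \in \mathfrak{P}$: for each such $p$, at most three residue classes modulo $p$ are forbidden, so some class is always permitted (the case $p = 2$ is handled by taking $q$ odd, as $2q+1$ is automatically odd). Pick such a collection $R$ of $2^{L-2}$ integers and invoke Proposition \ref{prop-freenegative} to produce $N$ distinct orbifolds $Q_1, \ldots, Q_N$ of the prescribed genus whose cone-point orders lie in $\{2q+1, 2q^2+q, q+2, 2q+q^2 : q \in R\}$ and satisfy $\chi_{(l)}^{ES}(Q_1) = \cdots = \chi_{(l)}^{ES}(Q_N)$ for each $l = 0, 1, \ldots, L$. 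Using the factorizations $2q^2+q = q(2q+1)$ and $2q+q^2 = q(q+2)$, every cone-point order is a product of factors coprime to every prime in $\mathfrak{P}$; hence by the first step $\chi_\Gamma^{ES}(Q_j) = \chi_{(l(\Gamma))}^{ES}(Q_j)$ for every $\Gamma \in \mathfrak{G}$ and every $j$. Since $l(\Gamma) \leq L$, this common value is independent of $j$, finishing the proof.

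The main obstacle is the first step: one must verify that the $\Gamma$-sector count for $Q$ actually reduces to the pure power $m_i^{l(\Gamma)}$ under the coprimeness hypothesis, which uses both the abelianness of cone-point isotropy (so Lemma \ref{lem-fingenmodcommutators} applies) and the structure theorem for finitely generated abelian groups to isolate the torsion part $G_\Gamma$. The finiteness of $\mathfrak{P}$ ensures a single choice of $R$ makes the cone-point orders coprime to $|G_\Gamma|$ for every $\Gamma$ at once, while the boundedness of ranks in $\mathfrak{A}$ bounds $L$ and reduces the problem to matching only finitely many $\chi_{(l)}^{ES}$, so that one application of Proposition \ref{prop-freenegative} suffices.
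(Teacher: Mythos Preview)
Your proposal is correct and follows essentially the same route as the paper: reduce to abelianizations via Lemma~\ref{lem-fingenmodcommutators}, choose the parameters $q$ so that $q$, $2q+1$, and $q+2$ (hence all four cone-point orders) are coprime to every prime in $\mathfrak{P}$, and then invoke Proposition~\ref{prop-freenegative} so that $\chi_\Gamma^{ES}(Q_j)=\chi_{(l(\Gamma))}^{ES}(Q_j)$ for each $\Gamma\in\mathfrak{G}$. The only cosmetic difference is that the paper writes down the explicit arithmetic progression $q(j)=j\bigl(2\prod_{p\in\mathfrak{P}} p\bigr)-1$ in place of your Chinese-remainder argument, and it leaves your general formula $\chi_\Gamma^{ES}(Q)=2-2g-k+\sum_i |\mathrm{HOM}(\Gamma,\Z/m_i\Z)|/m_i$ implicit rather than stating it separately.
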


In particular, note that the hypotheses of Theorem \ref{thrm-mainnegativegeneral}
are obviously satisfied for $\mathfrak{G}$ finite; hence Theorem \ref{thrm-mainnegative}
is a trivial consequence.  First, we have the following.  Recall that an \emph{abelian
orbifold} is an orbifold $Q$ such that every isotropy group of $Q$ is abelian.

\begin{lemma}
\label{lem-fingenmodcommutators}

Let $Q$ be an abelian orbifold and $\Gamma$ a finitely generated
discrete group.  Then $\tilde{Q}_\Gamma$ and $\tilde{Q}_{\Gamma/[\Gamma, \Gamma]}$
are diffeomorphic.  In particular, if $Q$ is closed, then
\[
    \chi_\Gamma^{ES}(Q)    =   \chi_{\Gamma/[\Gamma,\Gamma]}^{ES}(Q) .
\]

\end{lemma}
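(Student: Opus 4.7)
The plan is to construct an explicit isomorphism of presenting groupoids $\mathcal{G}^\Gamma \cong \mathcal{G}^{\Gamma/[\Gamma,\Gamma]}$ using the universal property of the abelianization, then derive the Euler-Satake statement as an immediate consequence.  The point is that since $Q$ is abelian, every local homomorphism $\phi_x : \Gamma \to G_x$ whose data defines the $\Gamma$-sectors factors canonically through $\Gamma/[\Gamma,\Gamma]$.

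First I would recall from Section \ref{sec-definitions} that $\mathcal{S}_\mathcal{G}^\Gamma$ locally consists of pairs $(x, \phi_x)$ with $x \in G_0$ and $\phi_x : \Gamma \to G_x$ a group homomorphism.  Because each $G_x$ is abelian, each such $\phi_x$ factors uniquely as $\phi_x = \bar{\phi}_x \circ \pi$ through the quotient map $\pi : \Gamma \to \Gamma/[\Gamma,\Gamma]$, yielding a canonical bijection
\[
    \mbox{HOM}(\Gamma, G_x) \longleftrightarrow \mbox{HOM}(\Gamma/[\Gamma,\Gamma], G_x),
    \qquad \phi_x \mapsto \bar{\phi}_x.
\]
I would assemble these pointwise bijections into a map
\[
    \Psi : \mathcal{S}_\mathcal{G}^\Gamma \longrightarrow \mathcal{S}_\mathcal{G}^{\Gamma/[\Gamma,\Gamma]},
    \qquad (x, \phi_x) \mapsto (x, \bar{\phi}_x).
\]

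Next I would verify that $\Psi$ is a $\mathcal{G}$-equivariant diffeomorphism.  Since $\pi$ is surjective, $\phi_x$ and $\bar{\phi}_x$ have the same image in $G_x$, so the fixed-point sets $V_x^{\langle \phi_x \rangle} = V_x^{\langle \bar{\phi}_x \rangle}$ agree literally as subsets of $V_x$; and since $G_x$ is abelian, $C_{G_x}(\phi_x) = G_x = C_{G_x}(\bar{\phi}_x)$.  Hence the linear charts for the components $\tilde{Q}_{(\phi)}$ and $\tilde{Q}_{(\bar{\phi})}$ coincide, so $\Psi$ is locally the identity in these charts and is therefore a diffeomorphism.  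Equivariance under the $\mathcal{G}$-action by pointwise conjugation is immediate from naturality of abelianization: conjugating $\phi_x$ by $g \in G_x$ and then factoring through $\pi$ yields the same homomorphism as factoring first and then conjugating the resulting map on $\Gamma/[\Gamma,\Gamma]$.

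Therefore $\Psi$ promotes to an isomorphism of translation groupoids $\mathcal{G}^\Gamma \cong \mathcal{G}^{\Gamma/[\Gamma,\Gamma]}$, hence the desired diffeomorphism $\tilde{Q}_\Gamma \cong \tilde{Q}_{\Gamma/[\Gamma,\Gamma]}$ of orbifolds.  The equality of Euler-Satake characteristics in the closed case then follows at once, since $\chi_{ES}$ depends only on the diffeomorphism type (directly from the simplicial definition applied to any common subdivision).  There is no real obstacle in this argument --- everything is dictated by the universal property --- but the only step requiring attention is checking that the local identifications glue consistently into a global equivariant map $\Psi$, which is automatic because the identifications are canonical and therefore compatible with the groupoid operations of $\mathcal{G}$.
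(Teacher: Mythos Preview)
Your proposal is correct and follows essentially the same approach as the paper: both construct the bijection $\mathcal{S}_\mathcal{G}^\Gamma \leftrightarrow \mathcal{S}_\mathcal{G}^{\Gamma/[\Gamma,\Gamma]}$ via the universal property of the abelianization, observe that the images (and hence the fixed-point sets and centralizers defining the local charts) coincide, and conclude that the map is the identity on charts and therefore a $\mathcal{G}$-equivariant diffeomorphism inducing the desired groupoid isomorphism. Your write-up is slightly more explicit about equivariance and about why $C_{G_x}(\phi_x) = G_x$, but the argument is the same.
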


\begin{proof}

Let $\rho :  \Gamma \rightarrow \Gamma/[\Gamma,\Gamma]$
denote the quotient map.  For each local group $G_x$ of $Q$, it is easy to see that as $G_x$ is abelian,
the correspondence $\phi_x \mapsto \phi_x \circ \rho$ is a bijection between $\mbox{HOM}(\Gamma, G_x)$ and
$\mbox{HOM}(\Gamma/[\Gamma,\Gamma], G_x)$.  It clearly follows that
\[
\begin{array}{rccl}
    e_\rho :&\mathcal{S}_\mathcal{G}^\Gamma
            &\longrightarrow&
            \mathcal{S}_\mathcal{G}^{\Gamma/[\Gamma, \Gamma]}        \\
    :&\phi_x              &\longmapsto        &\phi_x \circ \rho
\end{array}
\]
is a bijective.  See \cite[Section 3.3]{farsiseaton3} for a more general treatment of
maps on sectors induced by group homomorphisms, of which $e_\rho$ is an example.

Recall that if $\{V_x, G_x, \pi_x \}$ is a linear chart for $\mathcal{G}$ at $x$, then
$\{ V_x^{\langle \phi_x \rangle}, C_{G_x}(\phi_x), \pi_x^{\phi_x} \}$ is a linear chart for
$\mathcal{S}_\mathcal{G}^\Gamma$ at $\phi_x$.   As, $\Imt \phi_x = \Imt \phi_x \circ \rho \leq G_x$,
it follows that $e_\rho$ is simply the identity on charts and hence a  $\mathcal{G}$-equivariant
diffeomorphism.  It hence induces an isomorphism of orbifold groupoids between $\mathcal{G}^\Gamma$ and
$\mathcal{G}^{\Gamma/[\Gamma, \Gamma]}$.

\end{proof}

It follows that, for abelian orbifolds
$Q$ and $Q^\prime$,
\[
    \chi_\Gamma^{ES}(Q)    =   \chi_\Gamma^{ES}(Q^\prime) \;\;\; \forall \; \Gamma \in \mathfrak{G}
\]
if and only if
\[
    \chi_\Lambda^{ES}(Q)    =   \chi_\Lambda^{ES}(Q^\prime) \;\;\; \forall \; \Lambda \in \mathfrak{A}.
\]

\begin{proof}[Proof of Theorem \ref{thrm-mainnegativegeneral}]

Suppose $L$ is the maximum rank of the elements of $\mathfrak{A}$.
If $\mathfrak{P}$ is empty, then $\mathfrak{A}$ contains only free abelian
groups, and the result follows from Proposition \ref{prop-freenegative}.  So assume
$\mathfrak{P}\neq\emptyset$.

Let $\mathfrak{P} = \{ p_1, p_2, \ldots , p_r \}$, and define
\[
\begin{array}{rccl}
    q:  & \{ 1, 2, \ldots , 2^{L-1} \}
        &\longrightarrow&   \{ 2, 3, \ldots \}          \\
    :   & j
        &\longmapsto&   j\left( 2\prod\limits_{i=1}^r p_i \right) - 1.
\end{array}
\]
Then $q$ is order-preserving, and $q(j) \geq 2$ for each $j$.  Moreover,
for each $i$ and $j$, $q(j) \equiv -1$ mod $p_i$. Hence $2q(j) + 1 \equiv -1$ mod
$p_i$, and $q(j) + 2 \equiv 1$ mod $p_i$.  It follows that $q(j)$, $2q(j) +
1$, and $q(j)+ 2$ are not divisible by any element of $\mathcal{P}$.

By Proposition \ref{prop-freenegative}, for any choice of genus,
there are orbifolds $Q_1, \ldots , Q_N$ such that
$\chi_{(l)}^{ES}(Q_1) = \chi_{(l)}^{ES}(Q_2) = \cdots =
\chi_{(l)}^{ES}(Q_N)$ for each $l \leq L$. Moreover, we can choose
the $Q_j$ so that their cone points all have orders $q(j)+1, 2q(j)^2
+ q(j), q(j) + 2, 2q(j) + q(j)^2$ for values of $j \in \{ 1, 2,
\ldots, 2^{L-1} \}$; in particular, we use the function $q$ as
defined above in the proof of Lemma \ref{lem-inductivestep}.

Fix some $j$ and let $q = q(j)$.  For any homomorphism $\phi : G \rightarrow \Z/(2q + 1)\Z$ with $G$
finite, for each $g \in G$, $|g|$ must be divisible by the order of
$|\phi(g)|$, which must divide $2q + 1$. However, $|g|$ and $2q + 1$
are relatively prime by construction so that $|\phi(g)| = 1$ and $g
\in \mbox{Ker}\; \phi$. Hence, $\phi$ is the trivial homomorphism.
The same argument applies to homomorphisms
into $\Z/(2q^2 + q)\Z$, $\Z/(2q + q^2)\Z$, and $\Z/(q + 2)\Z$.

It follows that for any homomorphism  $\phi : \Z^l \oplus G
\rightarrow \Z/m\Z$ where $m = 2q + 1, 2q^2 + q, 2q + q^2$, or $q +
2$, each $g\in G$ is in the kernel, so that
\[
    \chi_{\Z^l \oplus G}^{ES}(Q_j)
        =     \chi_{(l)}^{ES}(Q_j)
\]
for each $\Z^l \oplus G \in \mathfrak{A}$ and each $j$, completing the proof.

\end{proof}


\section{Other Classes of Orbifolds}
\label{sec-otherorbs}

In this section, we demonstrate that the hypotheses of Theorem
\ref{thrm-mainpositive} cannot be relaxed to include
noneffective nor non-orientable orbifolds.  Note that in
the case of a global quotient, it is convenient to
describe the $\Gamma$-sectors globally as originally given in \cite{tamanoi1}.
See Equation \ref{eq-globalsectordescrip} above and \cite[Section 3.1]{farsiseaton2} for the equivalence of
these definitions.

\begin{example}
\label{ex-noneffective}

Let $\Z/6\Z = \langle a \rangle$ act on $S^2$ so that $a$ acts as a
rotation through $\pi/3$, and let $Q$ denote the resulting quotient orbifold. Then
$Q$ is effective, has underlying space homeomorphic to $S^2$, and
has two cone points, both with isotropy $\Z/6\Z$.  Similarly, let
$\Z/6\Z = \langle b \rangle$ act on $S^2$ where $b$ acts by a
rotation through $2\pi/3$. Then the quotient orbifold $Q^\prime$ has two cone
points with isotropy $\Z/6\Z$, and every other point has isotropy
$\Z/3\Z$.  Let $np, sp \in S^2$ denote the two fixed points of each
of these actions.  We claim that $\chi_\Gamma^{ES}(Q) =
\chi_\Gamma^{ES}(Q^\prime)$ for every finitely generated discrete $\Gamma$.

Let $\iota : a \mapsto b$ denote the obvious
isomorphism and fix $\Gamma$ finitely generated and discrete.  Then $\phi \mapsto
\iota \circ \phi$ of course defines a bijection between
$\mbox{HOM}(\Gamma, \langle a \rangle)$ and $\mbox{HOM}(\Gamma,
\langle b \rangle)$.  We note the following.

\begin{itemize}

\item   If $\Imt \phi = \langle 1\rangle$, then $(S^2;\langle a \rangle)_{(\phi)}
= S^2\rtimes \langle a\rangle$, diffeomorphic to $Q$, has Euler-Satake characteristic
$\frac{1}{3}$ and $(S^2;\langle b \rangle)_{(\iota \circ \phi)} =
S^2 \rtimes \langle b \rangle$, diffeomorphic to $Q^\prime$, has Euler characteristic
$\frac{1}{3}$.

\item   If $\Imt \phi = \langle a\rangle$ or $\langle a^2\rangle$,
then $(S^2;\langle a \rangle)_{(\phi)} = \{ np, sp \} \rtimes \langle a\rangle$
has Euler-Satake characteristic
$\frac{1}{3}$ and $(S^2;\langle a \rangle)_{(\iota \circ \phi)} = \{ np,
sp \} \rtimes \langle b \rangle$ has Euler characteristic
$\frac{1}{3}$.

\item   If $\Imt \phi = \langle a^3\rangle$, then
$(S^2;\langle a \rangle)_{(\phi)} = \{ np, sp \} \rtimes \langle a\rangle$ has
Euler-Satake characteristic $\frac{1}{3}$ and
$(S^2;\langle a \rangle)_{(\iota \circ \phi)} = S^2 \rtimes \langle b \rangle$,
diffeomorphic to $Q^\prime$ has Euler characteristic $\frac{1}{3}$.

\end{itemize}

It follows that
\[
    \chi_\Gamma^{ES}\left(\widetilde{(Q)}_{(\phi)}\right)
    =
    \chi_\Gamma^{ES}\left(\widetilde{(Q^\prime)}_{(\iota\circ\phi)}\right)
\]
for each $\phi \in \mbox{HOM}(\Gamma, \langle a \rangle)$.  Hence
there is no finitely generated discrete $\Gamma$ such that $\chi_\Gamma^{ES}$
distinguishes between $Q$ and $Q^\prime$.

\end{example}

\begin{example}
\label{ex-nonorientable}

Let $Q$ and $Q^\prime$
be the orbifolds homeomorphic as topological spaces to the
cylinder $S^1 \times [0, 1]$.
Let $B_0 = S^1 \times \{ 0 \}$ and
$B_1 = S^1 \times \{ 1 \}$ denote the boundary components of
$Q$, and similarly $B_0^\prime$ and $B_1^\prime$ the boundary components
of $Q^\prime$.  Both orbifolds have four corner reflectors as follows,
where $D_{2n}$ denotes the dihedral group of order $2n$.
The orbifold $Q$ has corner reflectors modeled by
$\R^2/D_6$ and $\R^2/D_{10}$ on $B_0$; and
$\R^2/D_{14}$ and $\R^2/D_{22}$ on $B_1$.
The orbifold $Q^\prime$ has corner reflectors modeled by
$\R^2/D_6$ and $\R^2/D_{14}$ on $B_0^\prime$; and
$\R^2/D_{10}$ and $\R^2/D_{22}$ on $B_1^\prime$.  By examining
boundary components, it is clear that $Q$ and $Q^\prime$ are
not diffeomorphic.

As all dihedral groups under consideration have an odd number of rotations
and hence the centralizer of an element of order $2$ is precisely the
group generated by that element, it is easy to see that
the $\Gamma$-sectors of $Q$
for each finitely generated discrete group $\Gamma$
all occur in the following list:
\begin{itemize}
\item   an orbifold diffeomorphic to $Q$,
\item   a circle with trivial $\Z/2\Z$-action, and
\item   a point with trivial $\Z/n\Z$-action, where $n = 3, 5, 7$, or $11$.
\end{itemize}
Similarly, the $\Gamma$-sectors of $Q^\prime$ are of the form
\begin{itemize}
\item   an orbifold diffeomorphic to $Q^\prime$,
\item   a circle with trivial $\Z/2\Z$-action, and
\item   a point with trivial $\Z/n\Z$-action, where $n = 3, 5, 7$, or $11$.
\end{itemize}
There is an obvious bijection between homomorphisms
from $\Gamma$ into the local groups of $Q$ and
homomorphisms from $\Gamma$ into the local groups of $Q^\prime$.
This bijection preserves the diffeomorphism class of the
corresponding sector in every case except that of the trivial homomorphism,
corresponding to the unique sectors diffeomorphic to $Q$ and $Q^\prime$.
However, as
\[
\begin{array}{rcl}
    \chi_{ES}(Q)
    &=&     \chi_{ES}(Q^\prime)
    \\\\
    &=&     -2 + \frac{1}{6} + \frac{1}{10} + \frac{1}{14} + \frac{1}{22}
    \\\\
    &=&
    \frac{-1867}{1155},
\end{array}
\]
it follows that $\chi_\Gamma^{ES}(Q) = \chi_\Gamma^{ES}(Q^\prime)$
for every finitely generated discrete $\Gamma$.

\end{example}

Finally, we note that constructions of orbifolds whose
$\Gamma$-Euler-Satake characteristics coincide can be used to
construct orbifolds of arbitrary even dimension with the same
properties.

\begin{corollary}
\label{cor-generaldim}

Let $N, n \geq 2$ be integers with $n$ even.  Let
$\mathfrak{G}$ be a nonempty collection of finitely generated
discrete groups such that, with the notation as in Subsection
\ref{subsec-generalgamma}, the ranks of the elements of
$\mathfrak{A}$ are bounded, and $\mathfrak{P}$ is finite. Then there
are distinct closed, connected, effective, orientable
$n$-dimensional orbifolds $Q_1, Q_2, \ldots , Q_N$ such that for
each $\Gamma \in \mathfrak{G}$,
\[
    \chi_\Gamma^{ES}(Q_1) = \chi_\Gamma^{ES}(Q_2) = \cdots = \chi_\Gamma^{ES}(Q_N).
\]

\end{corollary}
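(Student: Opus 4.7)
The plan is to bootstrap from the $2$-dimensional case already handled in Theorem \ref{thrm-mainnegativegeneral} by taking products with a fixed smooth closed manifold of complementary dimension. For $n = 2$ there is nothing to prove, so assume $n \geq 4$ is even and set $M = S^{n-2}$, a closed, connected, orientable manifold of even dimension $n - 2 \geq 2$ with $\chi(M) = 2 \neq 0$.

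First, I would invoke Theorem \ref{thrm-mainnegativegeneral} to obtain distinct closed, connected, effective, orientable $2$-orbifolds $Q_1^\prime, Q_2^\prime, \ldots, Q_N^\prime$ satisfying $\chi_\Gamma^{ES}(Q_1^\prime) = \cdots = \chi_\Gamma^{ES}(Q_N^\prime)$ for every $\Gamma \in \mathfrak{G}$. Define $Q_j = Q_j^\prime \times M$. Each $Q_j$ is a closed, connected, effective, orientable $n$-dimensional orbifold: connectedness and orientability are inherited from the factors, and effectiveness holds because $M$ contributes only trivial isotropy. Distinctness of the $Q_j$ follows at once from the distinctness of the $Q_j^\prime$, since $Q_j^\prime$ is recovered (up to diffeomorphism) as any slice $Q_j^\prime \times \{\mathrm{pt}\}$, or equivalently from the fact that the singular loci of the $Q_j$ are $\mathrm{Sing}(Q_j^\prime) \times M$, which are pairwise non-diffeomorphic.

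The key computation is that $\Gamma$-sectors commute with taking products by a manifold, and that the Euler-Satake characteristic is multiplicative. More precisely, for a presentation $\mathcal{G}$ of $Q_j^\prime$, the groupoid presenting $Q_j$ is $\mathcal{G} \times (M \rightrightarrows M)$; every isotropy group is of the form $G_x \times \{1\} \cong G_x$, so a homomorphism $\Gamma \to \mathcal{G} \times (M \rightrightarrows M)$ is just a homomorphism into $\mathcal{G}$ together with a point of $M$. Hence $\mathcal{S}_{\mathcal{G} \times M}^\Gamma = \mathcal{S}_\mathcal{G}^\Gamma \times M$ equivariantly, and the fixed-point/centralizer charts show that $\widetilde{(Q_j)}_\Gamma = \widetilde{Q_j^\prime}_\Gamma \times M$ as orbifolds. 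Applying the product formula for the Euler-Satake characteristic, which reduces to the multiplicativity $\chi_{ES}(X \times M) = \chi_{ES}(X) \cdot \chi(M)$ when $M$ is a manifold (immediate from a product simplicial decomposition, since isotropy orders are unchanged in the $M$-direction), yields
\[
    \chi_\Gamma^{ES}(Q_j) = \chi_\Gamma^{ES}(Q_j^\prime) \cdot \chi(M),
\]
so these quantities agree across $j = 1, \ldots, N$.

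There is no real obstacle; the only point requiring care is the verification of the product formulas for $\Gamma$-sectors and for $\chi_{ES}$, both of which are essentially formal consequences of the local chart description in Section \ref{sec-definitions} and the standard multiplicativity of the topological Euler characteristic on a CW decomposition compatible with the stratification. The odd-dimensional case is excluded for the usual reason that no closed orientable manifold of odd dimension has nonzero Euler characteristic, so one cannot realize nonzero $\chi_\Gamma^{ES}$-values in odd dimension via this product construction; this explains the hypothesis that $n$ is even.
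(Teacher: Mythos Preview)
Your proof is correct and follows exactly the paper's approach: apply Theorem \ref{thrm-mainnegativegeneral} to get the $2$-dimensional examples and take products with $S^{n-2}$, using multiplicativity of $\chi_\Gamma^{ES}$. The paper's proof is a two-line appeal to the multiplicativity of the $\Gamma$-Euler-Satake characteristic (citing \cite[Section 4.1]{farsiseaton3}); you have simply unpacked that reference by verifying directly that $\widetilde{(Q\times M)}_\Gamma \cong \widetilde{Q}_\Gamma \times M$ and that $\chi_{ES}$ is multiplicative for products with a manifold.
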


\begin{proof}

Since the $\Gamma$-Euler-Satake characteristic is multiplicative
(see \cite[Section 4.1]{farsiseaton3}), we need only apply Theorem
\ref{thrm-mainnegativegeneral} and take the product of each
$2$-orbifold with $S^{n-2}$.

\end{proof}


\bibliographystyle{amsplain}

\begin{thebibliography}{10}

\bibitem{ademleidaruan}
A. Adem, J. Leida, and Y. Ruan. {\it Orbifolds and stringy
topology}, Cambridge Tracts in Mathematics 171, Cambridge University
Press, Cambridge, 2007.

\bibitem{boileaump}
M. Boileau, S. Maillot, and J. Porti, {\it Three-Dimensional
Orbifolds and their Geometric Structures}, Panoramas et Synth\'{e}ses,
15, Soci\'{e}t\'{e} Math\'{e}matique de France, Paris, 2003.

\bibitem{bryanfulman}
J. Bryan and J. Fulman, Orbifold {E}uler characteristics and
the number of commuting $m$-tuples in the symmetric groups, {\it Ann.
Comb.} {\bf 2}(1) (1998), 1--6.

0000\bibitem{chenruangwt}
W. Chen and Y. Ruan, {\em Orbifold {G}romov-{W}itten theory}, in:
Orbifolds in mathematics and physics (Madison, WI, 2001), Contemp.
Math., 310, Amer. Math. Soc., Providence, RI, 2002,  pp. 25--85.

\bibitem{chenruanorbcohom}
W. Chen and Y. Ruan, A new cohomology theory of orbifold, {\it Commun.
Math. Phys.} {\bf 248}(1) (2004), 1--31.

\bibitem{dixon}
L. Dixon, J. Harvey, C. Vafa, and W. Witten,
Strings on orbifolds,
{\it Nucl. Phys. B} {\bf 261} (1985), 678--686

\bibitem{farsiseaton1}
C. Farsi and C. Seaton, Nonvanishing vector fields on
orbifolds, to appear in {\it Trans. Amer. Math. Soc.},
\newblock \texttt{arXiv:0807.2738v1 [math.DG]} (2008).

\bibitem{farsiseaton2}
C. Farsi and C. Seaton,
Generalized twisted sectors of orbifolds,
\newblock \texttt{arXiv:0902.1196v1 [math.DG]}

\bibitem{farsiseaton3}
C. Farsi and C. Seaton,
Generalized orbifold Euler characteristics
for general orbifolds and wreath products,
\newblock \texttt{arXiv:0902.1198v1 [math.DG]}

\bibitem{kawasaki1}
T. Kawasaki, The signature theorem for ${V}$-manifolds,
{\it Topology} {\bf 17}(1) (1978), 75--83.

\bibitem{massey}
W. S. Massey, {\it Algebraic topology: An introduction}, Graduate Texts
in Mathematics 56, Springer, Berlin, 1984.

\bibitem{moerdijkmrcun}
I. Moerdijk and J. Mr\v{c}cun, {\it Introduction to Foliations and
{L}ie Groupoids}, Cambridge Studies in Advanced Mathematics 91,
Cambridge University Press, Cambridge, NY, 2003.

\bibitem{moerdijkpronksimplicial}
I. Moerdijk and D. Pronk,
Simplicial cohomology of orbifolds,
{\it Indag. Math. (N.S.)} {\bf 10}(2) (1999), 269--293.

\bibitem{roan}
S.S. Roan,
Minimal resolutions of {G}orenstein orbifolds in dimension three,
{\it Topology} {\bf 35} (1996), 489--509.

\bibitem{satake2}
I. Satake,
The {G}auss-{B}onnet theorem for ${V}$-manifolds,
{\it Journ. Math. Soc. Japan} {\bf 9} (1957), 464--492.

\bibitem{tamanoi1}
H. Tamanoi, Generalized orbifold {E}uler characteristic of
symmetric products and equivariant {M}orava $K$-theory,
{\it Algebr. Geom. Topol.} {\bf 1} (2001), 115--141.

\bibitem{tamanoi2}
H. Tamanoi, Generalized orbifold {E}uler
characteristic of symmetric orbifolds and covering spaces,
{\it Algebr. Geom. Topol.} {\bf 3} (2003), 791--856.

\bibitem{thurston}
W. Thurston, {\em The Geometry and Topology of 3-Manifolds}, Lecture
Notes, Princeton University Math Dept., Princeton, New Jersey, 1978.

\end{thebibliography}

\end{document}